\numberwithin{equation}{section}
\newtheorem{thm}{Theorem}[section]
\newtheorem{prop}[thm]{Proposition}
\newtheorem{exam}[thm]{Example}
\newtheorem{rem}[thm]{Remark}
\newtheorem{lem}[thm]{Lemma}
\newtheorem{assum}[thm]{Assumption}
\def\D{{\cal D}}
\def\E{{\cal E}}
\def\F{{\cal F}}
\def\R{{\mathbb R}}
\def\Ss{{\mathbb S}}
\def\d{{\rm d}}
\title{Compactness of semigroups generated by
symmetric non-local Dirichlet forms with
unbounded coefficients}
\author{Yuichi Shiozawa\thanks{Department of Mathematics,
Graduate School of Science,
Osaka University, Toyonaka, Osaka, 560-0043,
Japan; \texttt{shiozawa@math.sci.osaka-u.ac.jp}}\qquad
Jian Wang\thanks{College of Mathematics and Informatics  \&
Fujian Key Laboratory of Mathematical Analysis and Applications (FJKLMAA)  \& Center for Applied Mathematics of Fujian Province (FJNU),
Fujian Normal University, Fuzhou, 350007, P.R. China; \texttt{jianwang@fjnu.edu.cn}}}
\begin{document}
\maketitle
\begin{abstract}
Let $(\E,\F)$ be a symmetric non-local Dirichlet from
with unbounded coefficient on $L^2(\R^d;\d x)$ defined by
$$\E(f,g)=\iint_{\R^d\times \R^d} (f(y)-f(x))(g(x)-g(y)){W(x,y)}\, J(x,\d y)\,\d x,
\quad f,g\in \F,$$
where $J(x,\d y)$ is regarded as the jumping kernel
for a pure-jump symmetric L\'evy-type process
with bounded coefficients,
and $W(x,y)$ is seen as a weighted (unbounded) function.
We establish sharp criteria
for compactness and non-compactness of
the associated
Markovian semigroup $(P_t)_{t\ge0}$ on $L^2(\R^d;\d x)$.
In particular, we prove that if $J(x,\d y)=|x-y|^{-d-\alpha}\,\d y$ with $\alpha\in (0,2)$, and
$$W(x,y)=
\begin{cases}
(1+|x|)^p+(1+|y|)^p, \ &  |x-y|< 1  \\
(1+|x|)^q+(1+|y|)^q, \ &  |x-y|\geq 1
\end{cases}$$ with  $p\in [0,\infty)$ and $q\in [0,\alpha)$,
then $(P_t)_{t\ge0}$ is compact, if and only if $p>2$.
This indicates that the compactness of $(\E,\F)$ heavily depends
on the growth of the weighted function $W(x,y)$ only for $|x-y|<1$.
Our approach is based on establishing
the
essential super Poincar\'e inequality
for $(\E,\F)$. Our
general results work
 even if the jumping kernel $J(x,\d y)$ is degenerate or  is singular
 with respect to the Lebesgue measure.
\end{abstract}

\section{Introduction}

It is an important research subject in functional analysis and mathematical physics to reveal
the spectral structure of a self-adjoint operator $L$
generating a Markovian semigroup $(P_t)_{t\ge0}:=(e^{-tL})_{t\ge0}$.
In view of the study of symmetric Markov processes,
we can deduce asymptotic properties, in particular, ergodic properties,
of transition semigroups by using the spectral structure.
To do so,
we would like to find conditions for compactness of $(P_t)_{t\ge0}$
 because it is equivalent to
the essential spectrum of $L$ being empty (see, e.g., \cite[Theorem 0.3.9(ii)]{W05}).
In fact, if this is the case
for the symmetric semigroup $(P_t)_{t\ge0}$ on $L^2(\R^d;\d x)$,
then there exist
eigenvalues $\{\lambda_n\}_{n\ge1}$
and the corresponding eigenfunctions $\{\varphi_n\}_{n\ge1}\subset L^2(\R^d;\d x)$
such that $P_tf=\sum_{n=1}^{\infty}
e^{-\lambda_n t}\langle\varphi_n,f\rangle_{L^2(\R^d;\d x)}\varphi_n$
for $f\in L^2(\R^d;\d x)$ and $t>0$. The readers are referred to
\cite{Wu, Ta} for several equivalent conditions for the compactness of symmetric Markov semigroups.

If $L$  is a (symmetric) second order elliptic operator of
the form
$\sum_{i,j=1}^{d} \frac{\partial }{\partial x_i}(a_{ij}(x)\frac{\partial}{\partial x_j})$
on $L^2(\R^d;\d x)$,
then the spectral structure of
$(P_t)_{t\ge0}$ is characterized in terms of
the growth order of the coefficient $\{a_{ij}(x)\}_{1\le i\le d}$ at infinity
(see,\ e.g., \cite{D85, D90,P88, P96}).
More precisely, let $\{a_{ij}(x)\}_{1\leq i,j\leq d}$ be a
$C^\infty$ function on $\R^d$ with values
in the set of positive symmetric matrices,
and $\E^L$ the quadratic form on $C_c^{\infty}(\R^d)$ defined by
\begin{equation}\label{eq:elliptic}
\E^L(f,g)=\frac{1}{2}
\int_{\R^d}
\sum_{i,j=1}^d a_{ij}(x)
\frac{\partial f}{\partial x_i}(x)\frac{\partial g}{\partial x_j}(x)
\,\d x.
\end{equation}
Here $C_c^{\infty}(\R^d)$ is the totality of smooth functions on $\R^d$
with compact support.
For simplicity, we assume that
there exist positive constants $\lambda$, $\Lambda$  and $p$
such that for any $x, \xi\in \R^d$,
\begin{equation}\label{eq:coeff}
\lambda(1+|x|)^p|\xi|^2
\leq \sum_{i,j=1}^da_{ij}(x)\xi_i\xi_j
\leq \Lambda (1+|x|)^p|\xi|^2.
\end{equation}
Then, the closure of the quadratic form $(\E^L,C_c^{\infty}(\R^d))$
is a regular Dirichlet form on $L^2(\R^d;\d x)$
generating a Markovian semigroup
(see e.g., \cite[Chapter 1 and  Section 3.1]{FOT11}).
In particular, this semigroup is compact, if and only if  $p>2$
(see \cite[Theorem 4.2 and Corollary 4.4]{D85} and Remark \ref{rem:diff} below).

The purpose of this paper is to establish sharp criteria
for compactness and non-compactness of the semigroups
associated
with a large class of non-local Dirichlet forms on $L^2(\R^d;\d x)$
with unbounded coefficients. To highlight the novelty of our contribution,
we present the assertions for stable-like Dirichlet forms
with unbounded coefficients in this section,
which is a special case of general results
(see Theorems \ref{Thm2.1} and \ref{thm:cpt} below).

For $d \ge1$ and $\alpha\in (0,2)$, the fractional Laplacian is defined by
$$-(-\Delta)^{\alpha/2}f(x)
:=\lim_{\varepsilon\to0}\int_{\{|x-y|>\varepsilon\}}(f(y)-f(x))\frac{C_{d,\alpha}}{|x-y|^{d+\alpha}}\,\d y,$$
where $C_{d,\alpha}$ is a positive constant depending on $d$ and $\alpha$.
It is known that the essential spectrum of
the operator $(-\Delta)^{\alpha/2}$ is $[0,\infty)$.
Equivalently, the associated Markovian semigroup is not compact.
On the contrary, this semigroup has nice analytical properties;
in particular, it has a strictly positive density function with respect to the Lebesgue measure
(see, e.g.,\,\cite{CK03}).
Therefore, according to \cite[Theorem 0.3.9 and Theorem 3.2.1]{W05},
for any strictly positive function $\psi\in L^2(\R^d;\d x)$,
there exists no decreasing function $\beta: (0,\infty)\to (0,\infty)$
such that the following essential super Poincar\'e inequality holds
for all $r>0$ and $f\in C_c^\infty(\R^d)$:
\begin{equation}\label{e:poin-1}
\int_{\R^d} f(x)^2\,\d x\le r \E^0(f,f)
+\beta(r)\left(\int_{\R^d} |f(x)|\psi(x)\,\d x\right)^2.
\end{equation}
Here $\E^0(f,f)$ is the bilinear form associated with the fractional Laplacian $-(-\Delta)^{\alpha/2}$,
i.e., for any $f,g\in C_c^\infty(\R^d)$,
$$\E^0(f,g)=\langle (-\Delta)^{\alpha/2} f,g\rangle_{L^2(\R^d;\d x)}
=\frac{1}{2}\iint_{\R^d\times \R^d} (f(y)-f(x))(g(x)-g(y))\frac{C_{d,\alpha}}{|x-y|^{d+\alpha}}\,\d x\,\d y.$$
In the theory of stochastic processes, the fractional Laplacian $-(-\Delta)^{\alpha/2}$ is the infinitesimal generator of the rotationally symmetric $\alpha$-stable process, and so  $\E^0(f,g)$ is called the $\alpha$-stable Dirichlet form in the literature.

In order for the validity of the inequality \eqref{e:poin-1},
one reasonable way is to enlarge the bilinear form $\E^0(f,f)$ at the right hand side of \eqref{e:poin-1};
that is, instead of $\E^0(f,g)$,
we will consider the following $\alpha$-stable like Dirichlet form
with unbounded coefficient:
\begin{equation}\label{e:DF}
\E(f,g):= \iint_{\R^d\times \R^d} (f(x)-f(y))(g(x)-g(y))\frac{W(x,y)}{|x-y|^{d+\alpha}}\,\d x\,\d y,\end{equation}
where
$W(x,y)$ is a strictly positive, symmetric and unbounded measurable function on $\R^d\times \R^d$.
We can regard \eqref{e:DF} as a non-local analogue of \eqref{eq:elliptic}.
If \eqref{e:poin-1} holds with $\E^0(f,f)$ replaced by $\E(f,f)$,
then we can immediately get the compactness of the semigroup associated with the bilinear form $\E(f,g)$.
This explains the motivation of our paper.

Let $(\E,\D(\E))$ be a quadratic form on $L^2(\R^d; \d x)$ such that
$\E$ is as in \eqref{e:DF}, and
 $\D(\E)=\left\{f\in L^2(\R^d;\d x): \E(f,f)<\infty\right\}.$  We assume that
\begin{equation}\label{e:cor-1}x\mapsto \int_{\R^d} (1\wedge |x-y|^2)\frac{W(x,y)}{|x-y|^{d+\alpha}}\,\d y\in L^1_{{\rm loc}}(\R^d;\d x).
\end{equation}
It holds that $C_c^\infty(\R^d)\subset \D(\E)$.
Let $\|\cdot\|_{\E_1}$ be the norm on $\D(\E)$ defined by
 $\|f\|_{\E_1}=\big(\E(f,f)+\|f\|_{L^2(\R^d;\d x)}^2\big)^{1/2},$  and
 $\F:=\overline{C_c^{\infty}(\R^d)}^{\|\cdot\|_{\E_1}}.$ Then,
$(\E,\F)$ is a regular Dirichlet form on $L^2(\R^d;\d x)$. Denote by $(P_t)_{t\ge0}$  the associated Markovian semigroup.
We have the following statement.

\begin{thm}\label{T:main}
Let $W(x,y)$ be a Borel measurable function
on $\R^d\times \R^d$ defined by
\begin{equation}\label{eq:funct-w}
W(x,y)=(U_1(x)+U_1(y)){\bf 1}_{\{|x-y|<1\}}
+(U_2(x)+U_2(y)){\bf 1}_{\{|x-y|\geq 1\}},
\end{equation}
where
 $U_i(x)$ $(i=1,2)$ is a nonnegative locally bounded function on $\R^d$
such that for some $c_1>0$ and $q\in [0,\alpha)$,
\begin{equation}\label{e:cond000}
U_2(x)\leq c_1(1+|x|)^q,\,\quad x\in \R^d.
\end{equation}
Then, we have
\begin{itemize}
\item[{\rm(i)}]
if $\inf_{x\in \R^d} U_1(x)>0$ and
$$
\liminf_{|x|\rightarrow\infty}\frac{U_1(x)}{|x|^2}=\infty,
$$ then  $(P_t)_{t\geq 0}$ is compact on $L^2(\R^d;\d x)${\rm ;}
\item[{\rm(ii)}] if
$$
\limsup_{|x|\rightarrow\infty}\frac{U_1(x)}{|x|^2}<\infty,
$$
then $(P_t)_{t\geq 0}$ is not compact on $L^2(\R^d;\d x)$.
\end{itemize}
\end{thm}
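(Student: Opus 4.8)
The plan is to route everything through Wang's criterion: by \cite[Theorem 0.3.9 and Theorem 3.2.1]{W05}, $(P_t)_{t\ge0}$ is compact on $L^2(\R^d;\d x)$ if and only if there exist a strictly positive $\psi\in L^2(\R^d;\d x)$ and a decreasing $\beta\colon(0,\infty)\to(0,\infty)$ with
\begin{equation*}
\int_{\R^d}f^2\,\d x\le r\,\E(f,f)+\beta(r)\Big(\int_{\R^d}|f|\psi\,\d x\Big)^{2},\qquad r>0,\ f\in\F .
\end{equation*}
Note first that $\alpha<2$ and $q<\alpha$ make \eqref{e:cor-1} hold for the $W$ of \eqref{eq:funct-w} and \eqref{e:cond000} (the part $\{|x-y|<1\}$ is integrable since $\int_{\{|z|<1\}}|z|^{2-d-\alpha}\,\d z<\infty$ and $U_1$ is locally bounded, the part $\{|x-y|\ge1\}$ since $\int_{\{|z|\ge1\}}(1+|z|)^{q}|z|^{-d-\alpha}\,\d z<\infty$), so the framework applies. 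For (i) I would establish the displayed inequality, and for (ii) I would refute it.

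\emph{Part (ii).} The hypothesis and local boundedness give $U_1(x)\le c_\ast(1+|x|)^{2}$ for all $x$. Fix $\phi\in C_c^\infty(\R^d)$ with $\operatorname{supp}\phi\subset\{1/2<|x|<2\}$ and $\|\phi\|_{L^2}=1$, and put $f_R(x):=R^{-d/2}\phi(x/R)$, so $\|f_R\|_{L^2}=1$ and $\operatorname{supp}f_R\subset B_{2R}\setminus B_{R/2}$. I would bound $\E(f_R,f_R)$ by splitting the integral at $|x-y|=1$ and at $|x-y|=R$: for $|x-y|\le R$ use $|f_R(x)-f_R(y)|\le\|\nabla f_R\|_\infty|x-y|$, for $|x-y|>R$ use $(f_R(x)-f_R(y))^{2}\le 2f_R(x)^{2}+2f_R(y)^{2}$. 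Since the integrand lives where $x$ or $y$ lies in a ball of radius $O(R)$, on which $U_1\le cR^{2}$ and $U_2\le c_1(cR)^{q}$, and since $\int_{\{|z|<1\}}|z|^{2-d-\alpha}\,\d z<\infty$ (as $\alpha<2$), the contribution of $\{|x-y|<1\}$ is bounded uniformly in $R$, while the contribution of $\{|x-y|\ge1\}$ is $O(R^{q-\alpha})\to0$ (as $q<\alpha$). Hence $\Lambda:=\sup_{R\ge1}\E(f_R,f_R)<\infty$. If the essential super Poincar\'e inequality held, applying it to $f_R$ and letting $R\to\infty$, so that $\int|f_R|\psi\le\|\mathbf 1_{B_{R/2}^{c}}\psi\|_{L^2}\to0$ because $\psi\in L^2$, would give $1\le r\Lambda$ for every $r>0$; this is absurd, so $(P_t)_{t\ge0}$ is not compact.

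\emph{Part (i).} I would combine a local super Poincar\'e inequality with a coercivity estimate outside a compact set. For the local part, for each $\rho>0$ the fractional Sobolev (or Nash) inequality on the bounded set $B_\rho$, together with $\inf_{\R^d}U_1>0$ (so the near-diagonal part of $\E$ dominates a fixed multiple of the truncated $\alpha$-stable form, the long jumps $|x-y|\ge1$ being a bounded perturbation), yields a decreasing $\gamma_\rho$ with $\int_{B_\rho}g^{2}\le s\,\E(g,g)+\gamma_\rho(s)\big(\int_{B_\rho}|g|\big)^{2}$ for all $s>0$. For the coercivity estimate I would show that for every $\varepsilon>0$ there are $\rho_\varepsilon,C_\varepsilon<\infty$ with
\begin{equation*}
\int_{\R^d}f^{2}\,\d x\le\varepsilon\,\E(f,f)+C_\varepsilon\int_{B_{\rho_\varepsilon}}f^{2}\,\d x,\qquad f\in\F .
\end{equation*}
This I would obtain by decomposing $\R^d$ into dyadic annuli $A_k$, bounding $\E(f,f)$ below by its near-diagonal part and then by $\sum_k$ of its restriction to $A_k$, and applying on each $A_k$ the Poincar\'e inequality for the \emph{truncated} $\alpha$-stable form; because $\alpha<2$ the truncated kernel has finite second moment, so the relevant spectral gap on $A_k$ is of order $(\operatorname{diam}A_k)^{-2}$, which is offset by a lower bound of the form $U_1(x)\ge\mu_k(1+|x|)^{2}$ on $A_k$ with $\mu_k\to\infty$ (this uses $\liminf_{|x|\to\infty}U_1(x)/|x|^{2}=\infty$), so the coefficient of $\int_{B_{\rho_\varepsilon}^{c}}f^{2}$ tends to $0$ as $\rho_\varepsilon\to\infty$. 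Finally, given $r>0$, taking $\varepsilon=r$, inserting the local inequality on $B_{\rho_r}$ with $s=r/C_r$, and using $\int_{B_{\rho_r}}|f|\le(\inf_{B_{\rho_r}}\psi)^{-1}\int|f|\psi$ for any fixed strictly positive $\psi\in L^2$, gives the essential super Poincar\'e inequality with $\beta(r):=C_r\gamma_{\rho_r}(r/C_r)(\inf_{B_{\rho_r}}\psi)^{-2}$, which is decreasing; hence $(P_t)_{t\ge0}$ is compact.

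The hard part is the coercivity estimate in (i). One must prove the Poincar\'e (and, where needed, Dirichlet-eigenvalue) estimates for the truncated $\alpha$-stable form on annuli with the \emph{sharp} $(\operatorname{diam})^{-2}$ scaling --- this is exactly what makes quadratic growth of $U_1$, rather than $\alpha$-growth, the right threshold --- and, more delicately, one must control the contribution of the annular means of $f$, equivalently the interaction energies between consecutive annuli, which calls for a chaining / effective-resistance estimate along the decomposition (with a separate, simpler argument when $d=1$). The non-local localization errors produced when $f$ is cut into annular pieces are absorbed using $\int_{\{|z|<1\}}|z|^{2-d-\alpha}\,\d z<\infty$, i.e.\ again $\alpha<2$.
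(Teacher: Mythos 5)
Your argument for Part (ii) is essentially the paper's: test the essential super Poincar\'e inequality (via Wang's criterion) against a one-parameter family of scaled test functions, show that $\E(f_R,f_R)$ stays bounded while $\int|f_R|\psi\,\d x\to0$, and derive a contradiction. The paper uses plateau functions $f_l=1$ on $B(0,l)$, vanishing outside $B(0,2l)$, with $\|\nabla f_l\|_\infty\le 2/l$, whereas you rescale a fixed bump supported in an annulus; this is a cosmetic difference, and your estimate of the energy (splitting at $|x-y|=1$ and $|x-y|=R$, with $\alpha<2$ controlling the near-diagonal part and $q<\alpha$ the long jumps) matches the calculation in the proof of Theorem \ref{Thm2.1}.

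For Part (i) you propose a genuinely different route, and this is where there is a real gap. You reduce the problem to the coercivity estimate
$\int_{\R^d}f^2\,\d x\le\varepsilon\,\E(f,f)+C_\varepsilon\int_{B_{\rho_\varepsilon}}f^2\,\d x$
and then propose to obtain it by a dyadic annular decomposition plus Poincar\'e inequalities on annuli for the truncated $\alpha$-stable form. But a Poincar\'e inequality on $A_k$ controls only $\int_{A_k}|f-\bar f_{A_k}|^2$ and says nothing directly about $\int_{A_k}f^2$; you yourself flag that ``one must control the contribution of the annular means of $f$, \ldots which calls for a chaining / effective-resistance estimate,'' and then leave that entirely unproved. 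This is not a detail --- it is the heart of the statement. Without a quantitative bound of the form $\sum_k|A_k||\bar f_{A_k}|^2\le o(1)\,\E(f,f)+C\int_{B_\rho}f^2$ as $\rho\to\infty$, the decomposition gives nothing, and it is not obvious (and I do not see a short argument) that the chaining estimate does produce a coefficient that tends to zero; the na\"ive Cauchy--Schwarz bound $|A_k||\bar f_{A_k}|^2\le\int_{A_k}f^2$ is circular. Moreover, to apply Wang's criterion one must also verify that $(P_t)$ admits a density with respect to Lebesgue measure; the paper does this via a global Nash inequality (Assumption~\ref{assum:w}(ii) and \cite[Theorem~3.3.15]{W05}), while your local Nash inequality on $B_\rho$ alone would not suffice.

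The paper avoids the annular-means problem entirely by a Lyapunov-function / Hardy-type argument: with $\phi(x)=(1+|x|^2)^{-\delta/2}$, $\delta\in(0,1)$, one has the Hardy-type inequality $\E(f,f)\ge 2\int(-{\cal L}_\lambda\phi/\phi)f^2\,\d x$ (Lemma~\ref{lem:hardy-type}), and the explicit computation in Proposition~\ref{prop:ratio} --- a second-order Taylor expansion together with the symmetry of $\nu$ and the comparison $W(x,y)\ge V_\lambda(x)+V_\lambda(y)$ --- shows $-{\cal L}_\lambda\phi/\phi\ge C$ outside a ball, with $\lambda\sim C$. This gives the coercivity estimate with an explicit small coefficient, without ever decomposing $\R^d$ into annuli or touching annular means. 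The restriction $\delta\in(0,1)$ together with the Cauchy--Schwarz bound $F(x)\le\gamma_0$ is what produces the sharp threshold $p>2$, and corresponds to the step you identified as requiring the ``sharp $(\operatorname{diam})^{-2}$ scaling'' in your annular scheme. If you want to complete your alternative route, the chaining estimate must be proved and must be shown to match this exponent --- that is the missing step.
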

\eqref{e:cond000} along with the local boundedness of $U_i$ $(i=1,2)$ implies that \eqref{e:cor-1} holds.
As a direct consequence of Theorem \ref{T:main},
by letting
\begin{equation}\label{e:DF--}W(x,y)=
\begin{cases}
(1+|x|)^p+(1+|y|)^p, \ &  |x-y|< 1  \\
(1+|x|)^q+(1+|y|)^q, \ &  |x-y|\geq 1
\end{cases}\end{equation}
with $p\in [0,\infty)$ and $q\in [0,\alpha)$,
the associated semigroup $(P_t)_{t\geq 0}$ is compact, if and only if,
$p> 2.$

We will make a few comments on Theorem \ref{T:main}.

\begin{itemize}
\item[(i)]
Theorem \ref{T:main} indicates that the compactness of the semigroup associated with the Dirichlet form $(\E,\F)$ given by \eqref{e:DF} heavily depends on the growth of the weighted function $W(x,y)$ only for $|x-y|<1$.
Indeed, according to our general results below (see Theorems \ref{Thm2.1} and \ref{thm:cpt}), the assertion of Theorem \ref{T:main} still holds true for the  truncated version of $(\E,\F)$, i.e., the Dirichlet form $(\E^*,\F^*)$ on $L^2(\R^d;\d x)$ given by
$$\E^*(f,g):=\iint_{\{|x-y|<1\}} (f(x)-f(y))(g(x)-g(y))
\frac{W(x,y)}{|x-y|^{d+\alpha}}\,\d x\,\d y$$
and $\F^*:=\overline{C_c^{\infty}(\R^d)}^{\|\cdot\|_{\E^*_1}},$ where $\|f\|_{\E^*_1}=\big(\E^*(f,f)+\|f\|_{L^2(\R^d;\d x)}^2\big)^{1/2}$.
In particular, for the weighted function $W(x,y)$ given by \eqref{e:DF--}, the semigroup associated with $(\E^*,\F^*)$ is compact, if and only if,
$p> 2.$
This shows that in this case the criteria for compactness of the semigroup associated with the non-local Dirichlet form $(\E^*,\F^*)$ on $L^2(\R^d;\d x)$ with finite range jumping kernel is
the
same as these for the local Dirichlet form given by
\eqref{eq:elliptic}.

\item[(ii)] The proof of Theorem \ref{T:main} is based on establishing
the
essential super Poincar\'e inequality
for $(\E,\F)$, which was first introduced by F.-Y. Wang in \cite{W02}.
However, there are few results concerning
the (optimal) essential super Poincar\'e inequality for
non-local Dirichlet forms.
In order to prove Theorem \ref{T:main},
we will make use of the comparison argument and Hardy-type inequality
for
$(\E,\F)$
(see Lemma \ref{lem:hardy-type}).
The test function $\phi$ involved in that inequality is given by
$\phi(x)=(1+|x|^2)^{-\delta/2}$ with $\delta\in (0,1)$.
Then for any $C>0$, there exist $\lambda>0$ and $R_0>0$
such that  for any $x\in \R^d$ with $|x|\geq R_0$,
$$\frac{-{\cal L}_\lambda \phi}{\phi}(x)\geq C,$$
where $\{{\cal L}_\lambda\}_{\lambda\ge1}$ is a family of the
formal
generators
of auxiliary quadratic forms dominated by $(\E, \F)$;
see Propositions \ref{prop:pair} and \ref{prop:ratio}.
This approach is powerful in the sense that
we can even deduce
the corresponding statements for more general non-local Dirichlet forms
with unbounded coefficients.

\item[(iii)]
Recently, the compactness of the weighted fractional heat semigroups
has been studied.
Let $\check{{\cal L}}=-(1+|x|)^p(-\Delta)^{\alpha/2}$ be
a (formal) self-adjoint operator on $L^2(\R^d;\d x/(1+|x|)^p)$ with $p\geq 0$
and $d>\alpha$.
Then, the associated Markovian semigroup is compact,
if and only if $p>\alpha$; see
\cite[Proposition 4.1]{TTT17} and
\cite[Corollary 2.3]{W19}.
See also \cite[Theorems 4.8 and 2.2]{M20+}
for the study of compactness for the semigroup
on
$L^1(\R^d; \d x/(1+|x|)^p)$.
In fact, with the aid of the Hardy inequality or the fractional Sobolev inequality,
the second named author (\cite{W19}) proved that
the following essential super Poincar\'e inequality
\begin{equation}\label{e:comme}
\int_{\R^d} f(x)^2\frac{1}{(1+|x|)^p}\,\d x
\le r \E^0(f,f)+\beta(r)\left(\int_{\R^d} |f(x)| \frac{\psi(x)}{(1+|x|)^p}\,\d x\right)^2
\end{equation}
holds for all $ r>0$ and $f\in C_c^\infty(\R^d)$ with some deceasing function $\beta:(0,\infty)\to (0,\infty)$
and some function $\psi\in L^2(\R^d;\d x)$, if and only if, $p>\alpha$;
see the proof of \cite[Theorem 1.2]{W19} for more details.
Thus, one may try to replace the function $f(x)$ by $f(x)(1+|x|)^{p/2}$ in \eqref{e:comme},
and then to establish the desired essential super Poincar\'e inequality for the Dirichlet form $(\E,\F)$ given by \eqref{e:DF}.
However, it seems that this approach does not work,
and is far from getting  
the sharp result as shown in Theorem \ref{T:main}.
\end{itemize}

The rest of the paper is arranged as follows.
In Section \ref{e:section2}, we present assertions and their proofs
for the compactness and non-compactness criteria
of semigroups associated with general non-local Dirichlet forms with unbounded coefficients; see Theorems \ref{Thm2.1} and \ref{thm:cpt}. As mentioned above, main tasks of the proofs are to disprove and to establish essential super Poincar\'e inequalities for the associated Dirichlet form. In Section \ref{section3},
we present the proof of Theorem \ref{T:main},
and also give 
two examples of non-local Dirichlet forms
with degenerate or singular jumping kernel to illustrate our general results.

\section{General results}\label{e:section2}
Let $J(x,\d y)$ be a nonnegative  kernel on $\R^d\times {\cal B}(\R^d)$
such that the measure $J(\d x, \d y):=J(x,\d y)\,\d x$ satisfies
the symmetry condition
\begin{equation}\label{eq:symmetry}
J(\d x, \d y)= J(\d y, \d x),
\end{equation}
and
\begin{equation}\label{e:core0}
\sup_{x\in \R^d}\int_{\R^d}(1\wedge |x-y|^2)\,J(x,\d y)<\infty.
\end{equation}
That is, $J(x,\d y)$ is regarded as
the jumping kernel
for a pure-jump L\'evy-type process
with bounded coefficients.
Let $W(x,y)$ be a nonnegative, locally bounded and symmetric Borel measurable function
on $\R^d\times \R^d$ such that
\begin{equation}\label{e:core}
x\mapsto \int_{\R^d} (1\wedge |x-y|^2)W(x,y)\,J(x,\d y)\in L^1_{{\rm loc}}(\R^d;\d x).
\end{equation}
Roughly speaking, $W(x,y)$ is regarded as a weighted function.

Let $(\E,\D(\E))$ be a quadratic form on $L^2(\R^d;\d x)$ defined by
\begin{align*}
\D(\E)&=\left\{u\in L^2(\R^d;\d x) :
\iint_{\R^d\times \R^d}
(u(x)-u(y))^2W(x,y)\,J(x,\d y)\,\d x<\infty\right\},\\
\E(u,u)&= \iint_{\R^d\times \R^d}(u(x)-u(y))^2W(x,y)\,J(x,\d y)\,\d x.
\end{align*}
It is easy to verify that
$C_c^{\infty}(\R^d)\subset \D(\E)$
under \eqref{e:core}.
Define the norm $\|\cdot\|_{\E_1}$ on $\D(\E)$ by
$$
\|f\|_{\E_1}=\left(\E(f,f)+\|f\|_{L^2(\R^d;\d x)}^2\right)^{1/2}.
$$
We can then define
$$
\F:=\overline{C_c^{\infty}(\R^d)}^{\|\cdot\|_{\E_1}},
$$
so that $(\E,\F)$ is a regular Dirichlet form on $L^2(\R^d;\d x)$,
see, e.g., \cite[Example 1.2.4]{FOT11}.
We denote by $(P_t)_{t\geq 0}$ the $L^2$-semigroup
associated with $(\E,\F)$.

\subsection{Condition for non-compactness}
In this subsection, we provide a sufficient condition
for non-compactness of $(P_t)_{t\geq 0}$.
\begin{thm}\label{Thm2.1}
Suppose that
\begin{equation}\label{e:sss}\liminf_{l\to\infty} \left[l^{-d}\int_{\{|x|\le l\}}  \int_{\R^d} \left(1\wedge \frac{|x-y|^2}{l^2}\right){W(x,y)}\,J(x,\d y)\,\d x\right]<\infty.
\end{equation}
Then, the semigroup $(P_t)_{t\ge0}$ is not compact on $L^2(\R^d;\d x)$.\end{thm}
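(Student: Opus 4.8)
The plan is to \emph{disprove} the essential super Poincar\'e inequality for $(\E,\F)$, which by \cite[Theorems 0.3.9 and 3.2.1]{W05} is equivalent to non-compactness of $(P_t)_{t\ge0}$; concretely, we shall exhibit a sequence that is bounded in $(\F,\|\cdot\|_{\E_1})$ but has no $L^2$-convergent subsequence (so that already the embedding $(\F,\|\cdot\|_{\E_1})\hookrightarrow L^2(\R^d;\d x)$ fails to be compact). Fix $\varphi\in C_c^\infty(\R^d)$ with $\supp\varphi\subset B(0,1)$ and $\|\varphi\|_{L^2(\R^d;\d x)}=1$, and for $l\ge1$ set $u_l(x):=l^{-d/2}\varphi(x/l)\in C_c^\infty(\R^d)\subset\F$, so that $\|u_l\|_{L^2(\R^d;\d x)}=1$ for every $l$. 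The shape of hypothesis \eqref{e:sss} (the region $\{|x|\le l\}$ and the scaling by $l$) is exactly matched by these dilated bumps.

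The first step is the energy estimate. Since $u_l(x)-u_l(y)=l^{-d/2}(\varphi(x/l)-\varphi(y/l))$, the mean value theorem together with $\|\varphi\|_\infty<\infty$ gives $(u_l(x)-u_l(y))^2\le c_\varphi\,l^{-d}\big(1\wedge\frac{|x-y|^2}{l^2}\big)$ with $c_\varphi:=\max\{4\|\varphi\|_\infty^2,\|\nabla\varphi\|_\infty^2\}$, and the left-hand side vanishes unless $x\in B(0,l)$ or $y\in B(0,l)$. Decomposing $\{|x|\le l\}\cup\{|y|\le l\}$ into $\{|x|\le l\}$ and $\{|x|>l,\ |y|\le l\}$ and using the symmetry $J(\d x,\d y)=J(\d y,\d x)$, $W(x,y)=W(y,x)$ on the latter, one obtains
$$\E(u_l,u_l)\le 2c_\varphi\,l^{-d}\int_{\{|x|\le l\}}\int_{\R^d}\Big(1\wedge\frac{|x-y|^2}{l^2}\Big)W(x,y)\,J(x,\d y)\,\d x,$$
which is finite for each $l\ge1$ because $1\wedge\frac{|x-y|^2}{l^2}\le 1\wedge|x-y|^2$ and \eqref{e:core} holds. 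By \eqref{e:sss} there is a sequence $l_k\uparrow\infty$ along which the bracketed quantity stays bounded by some $M<\infty$, hence $\sup_k\E(u_{l_k},u_{l_k})\le 2c_\varphi M$; thus $\{u_{l_k}\}$ is bounded in $(\F,\|\cdot\|_{\E_1})$ while $\|u_{l_k}\|_{L^2(\R^d;\d x)}=1$.

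It remains to see that $u_{l_k}$ escapes every compact part of $L^2$. For any $h\in L^2(\R^d;\d x)$, splitting $h=h{\bf 1}_{B(0,\rho)}+h{\bf 1}_{B(0,\rho)^c}$ and using $\|u_{l_k}\|_\infty=l_k^{-d/2}\|\varphi\|_\infty$ together with $h{\bf 1}_{B(0,\rho)}\in L^1$ on the first term, and $\|u_{l_k}\|_{L^2(\R^d;\d x)}=1$ on the second, one gets $\int_{\R^d}|u_{l_k}|\,|h|\,\d x\to0$ as $k\to\infty$ (choose $\rho$ large, then $k$ large). Taking $h=\psi$ for any strictly positive $\psi\in L^2(\R^d;\d x)$ shows $\int_{\R^d}|u_{l_k}|\psi\,\d x\to0$, so inserting $u_{l_k}$ into the essential super Poincar\'e inequality and letting first $k\to\infty$ and then $r\downarrow0$ yields $1\le0$, a contradiction; taking $h$ arbitrary shows $u_{l_k}\rightharpoonup0$ weakly in $L^2(\R^d;\d x)$, so no subsequence of $\{u_{l_k}\}$ converges in $L^2$. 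Either way $(P_t)_{t\ge0}$ is not compact. I do not expect a genuine obstacle here: the only steps requiring care are the symmetrization and support bookkeeping in the energy bound, and the (routine) verification that $\int_{\R^d}|u_{l_k}|\psi\,\d x\to0$ when $\psi\in L^2(\R^d;\d x)$ is merely square integrable rather than integrable.
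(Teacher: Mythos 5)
Your proposal is correct and is essentially the paper's argument in lightly different clothing. The paper takes a plateau function $f_l$ equal to $1$ on $B(0,l)$, vanishing outside $B(0,2l)$, with $\|\nabla f_l\|_\infty\le 2/l$, fixes $\psi(x)=e^{-|x|}\in L^1\cap L^2$ so that $\int|f_l|\psi$ stays bounded, derives the bound $\E(f_l,f_l)\le c\int_{\{|x|\le 6l\}}\int_{\R^d}(1\wedge\frac{|x-y|^2}{(6l)^2})W(x,y)\,J(x,\d y)\,\d x$ using the same Lipschitz/symmetry bookkeeping as you, and then contradicts the essential super Poincar\'e inequality after dividing by $l^d$ and sending $l\to\infty$ then $r\to 0$. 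You instead pre-normalize by $l^{-d/2}$ so $\|u_l\|_2=1$, which absorbs the $l^d$ factor and lets you handle an arbitrary strictly positive $\psi\in L^2$ (showing $\int|u_{l_k}|\psi\to 0$ by the $L^\infty$/$L^2$ splitting rather than by a uniform $L^1(\psi)$ bound); you also point out the slightly more elementary alternative that $\{u_{l_k}\}$ is $\|\cdot\|_{\E_1}$-bounded with $\|u_{l_k}\|_2=1$ and $u_{l_k}\rightharpoonup 0$, so the embedding $(\F,\|\cdot\|_{\E_1})\hookrightarrow L^2$ is not compact. Both variations are correct; neither changes the substance of the argument, which in both cases is dilated bumps matched to the scaling of \eqref{e:sss} together with the energy estimate and the disproof of \eqref{eq:s-p}.
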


\begin{proof}
We apply some
idea in the proof of \cite[Theorem 2.2(ii)]{W19}.
Suppose that $(P_t)_{t\geq 0}$ is compact on $L^2(\R^d;\d x)$.
We then see by \cite[Theorem 0.3.9 and Theorem 3.2.1]{W05} that
for any strictly positive function $\psi\in L^2(\R^d; \d x)$,
there exists a strictly positive nondecreasing function $\beta(r)$ on $(0,\infty)$
such that for any $r>0$ and $f\in C_c^\infty(\R^d)$,
\begin{equation}\label{eq:s-p}
\int_{\R^d}f(x)^2\,\d x\leq r\E(f,f)+\beta(r)\left(\int_{\R^d}
|f(x)|
\psi(x) \,\d x\right)^2.
\end{equation}

We now take $\psi(x)=e^{-|x|}$.
For $l\geq 1$, let $f_l\in C_c^{\infty}(\R^d)$ satisfy
$$f_l(x)\begin{cases}=1, &  0\leq |x|\leq l, \\
\in [0,1],&l\leq |x|\leq 2l,\\
=0, &  |x|\geq 2l
\end{cases}$$ and $\|\nabla f_l\|_\infty\le 2l^{-1}.$
Then
\begin{equation}\label{eq:f_l-1}
\int_{\R^d}f_l(x)^2\,\d x\geq c_1l^d
\end{equation}
and
\begin{equation}\label{eq:f_l-2}
\int_{\R^d}|f_l(x)|\psi(x)\,\d x
\leq \int_{\R^d}\psi(x)\,\d x=c_2.
\end{equation}
Here and in what follows, all the constants $c_i$ are independent of $l$.
Furthermore,
\begin{align*}
\E(f_l,f_l)
 =&\iint_{\R^d\times\R^d}(f_l(x)-f_l(y))^2W(x,y)\,J(x,\d y)\,\d x\\
 =& 2 \int_{\{|x|\leq 2l\}} \int_{\R^d}(f_l(x)-f_l(y))^2W(x,y)\,J(x,\d y)\,\d x\\
 =
 &2\int_{\{|x|\leq 2l\}} \int_{\{|y|<4l\}}(f_l(x)-f_l(y))^2W(x,y)\,J(x,\d y)\,\d x\\
&+2\int_{\{|x|\leq 2l\}} \int_{\{|y|\geq 4l\}}(f_l(x)-f_l(y))^2W(x,y)\,J(x,\d y)\,\d x \\
=&:2{\rm (I)}+2{\rm (II)},
\end{align*} where in the second equality
we used the fact that ${\rm supp}(f_l)\subset B(0,2l)$,
\eqref{eq:symmetry} and the symmetry of $W(x,y)$.
Here and in what follows, $B(0, r):=\{x\in \R^d : |x|<r\}$ for
$r>0$.

We have
\begin{align*}
{\rm (I)} &\leq
 \int_{\{|x|\leq 2l\}} \int_{\{|x-y|<6l\}}(f_l(x)-f_l(y))^2W(x,y)\,J(x,\d y)\,\d x\\
&\le 4l^{-2}
 \int_{\{|x|\leq 2l\}} \int_{\{|x-y|<6l\}}|x-y|^2W(x,y)\,J(x,\d y)\,\d x
\end{align*}
and
\begin{align*}
{\rm (II)}
&\leq \int_{\{|x|\leq 2l\}} \int_{\{|x-y|\geq 2l\}}W(x,y)\,J(x,\d y)\,\d x\\
&\le 4^{-1}l^{-2} \int_{\{|x|\leq 2l\}}
 \int_{\{2l\le |x-y|<
 6l\} }|x-y|^2 W(x,y)\,J(x,\d y)\,\d x\\
&\quad+ \int_{\{|x|\leq 2l\}} \int_{\{ |x-y|\ge 6l\} }W(x,y)\,J(x,\d y)\,\d x.
\end{align*} Hence,
\begin{equation}\label{eq:f_l-3}\E(f_l,f_l)\le c_3 \int_{\{|x|\le 6l\}}
\int_{\R^d} \left(1\wedge \frac{|x-y|^2}{(6l)^2}\right){W(x,y)}\,J(x,\d y)\,\d x.
\end{equation}

Putting \eqref{eq:f_l-1}, \eqref{eq:f_l-2} and \eqref{eq:f_l-3} into \eqref{eq:s-p}
with $f=f_l$,
we find that there are constants $c_4,c_5>0$ such that for all $l\ge1$ and $r>0$.
$$l^d\le c_4 r \int_{\{|x|\le 6l\}}  \int_{\R^d} \left(1\wedge \frac{|x-y|^2}{(6l)^2}\right){W(x,y)}\,J(x,\d y)\,\d x+ c_5\beta(r);$$
that is,
$$
1\le c_4 r\left[l^{-d}\int_{\{|x|\le 6l\}} \int_{\R^d}
\left(1\wedge \frac{|x-y|^2}{(6l)^2}\right){W(x,y)}\,J(x,\d y)\,\d x\right]
+ c_5\beta(r)l^{-d}.$$
However, under \eqref{e:sss}, we have a contradiction from the inequality above
by taking liminf for  $l\rightarrow\infty$ and then
letting
$r\to 0$.
Namely, $(P_t)_{t\geq 0}$ is not compact on $L^2(\R^d;\d x)$.
\end{proof}

\begin{rem}\label{rem:diff}\rm
Let $(\E^L,C_c^{\infty}(\R^d))$ be a quadratic form as in \eqref{eq:elliptic}.
Suppose that $\{a_{ij}(x)\}_{1\leq i,j\leq d}$ is local uniformly elliptic,
and that
there exists a positive Borel measurable function $a(x)$ on $\R^d$
such that for any $x,\xi\in \R^d$,
$$
\sum_{i,j=1}^da_{ij}(x)\xi_i\xi_j\leq a(x)|\xi|^2.
$$
Denote by $\F^L$ the closure of $C_c^{\infty}(\R^d)$, i.e., $\F^L:=\overline{C_c^{\infty}(\R^d)}^{\|\cdot\|_{\E^L_1}}$ with
$\|f\|_{\E^L_1}=\big(\E^L(f,f)+\|f\|_{L^2(\R^d;\d x)}^2\big)^{1/2}.$
By analogy with  the proof of Theorem \ref{Thm2.1},
we can show that
the semigroup associated with
$(\E^L,\F^L)$ is non-compact, if
$$
\liminf_{l\rightarrow\infty}\left[l^{-(d+2)}\int_{\{|x|\leq 2l\}}a(x)\,\d x\right]<\infty.
$$
In particular, if $\{a_{ij}(x)\}_{1\leq i,j\leq d}$ satisfies \eqref{eq:coeff}, then
 the semigroup associated with $(\E^L,\F^L)$ is non-compact when $p\leq 2$.
\end{rem}

\subsection{Condition for compactness}
We first recall that $W(x,y)$ and $J(x,\d y)$ satisfy \eqref{e:core}.
To establish a sufficient condition
for
compactness of the semigroup $(P_t)_{t\ge0}$,
we will impose the following conditions on $W(x,y)$ and $J(x,\d y)$, respectively.
\begin{assum}\label{assum:w}\rm
\begin{enumerate}
\item[(i)]  The function $W(x,y)$ is locally bounded on $\R^d\times \R^d$ such that
for  any $\lambda>0$,
there exist a strictly positive $C^1$-function $V_{\lambda}(x)$ on $\R^d$
and a constant $R_0>0$ so that
\begin{itemize}
\item $W(x,y)\geq V_{\lambda}(x)+V_{\lambda}(y)$
for any $x,y\in \R^d$ with $|x-y|<1$;
\item $\inf_{x\in \R^d}V_\lambda(x)>0$, and $V_{\lambda}(x)=\lambda(1+|x|^2)$ for any $x\in B(0, R_0)^c$.
\end{itemize}
\item[(ii)]  There exists a nonnegative Borel measure $\nu(\d z)$
on $[0,1)$ such that
\begin{itemize}
\item $J(x,x+A)\ge \nu(A)$ for any $x\in \R^d$ and Borel set $A\subset B(0,1)$;
\item $\nu(A)=\nu(-A)$ for any Borel set $A\subset B(0,1)$,
where $-A=\{x\in \R^d \mid -x\in A\}$;
\item Let
$$\varphi(\xi)=\int_{\{|z|< 1\}} (1-\cos \langle z,\xi\rangle)\,\nu(\d z),\quad \xi\in \R^d$$ and
$$
\beta_0(r)=\int_{\R^d} e^{-r\varphi(\xi)}\,\d \xi,\quad r>0.$$
Then
$$\beta_0(r)<\infty,\quad \int_r^\infty \frac{\beta_0^{-1}(s)}{s}\,\d s<\infty,\quad r>0,$$
where
$\beta_0^{-1}(r)$ is an inverse function of $\beta_0(r)$.
\end{itemize}
\end{enumerate}
\end{assum}

Under Assumption \ref{assum:w}(i), we have
$$\inf_{x,y\in \R^d: |x-y|<1}W(x,y)>0.$$
Note also that
by \eqref{e:core0},
the measure $\nu$ in Assumption \ref{assum:w}(ii) satisfies
\begin{equation}\label{e:moment}
\int_{\{0<|z|<1\}}|z|^2\,\nu(\d z)
\le \sup_{x\in \R^d}
\int_{\{0<|x-y|<1\}}|x-y|^2J(x,y)\,\d y<\infty.
\end{equation}
We then have
\begin{thm}\label{thm:cpt}
Under Assumption {\rm \ref{assum:w}},
the semigroup $(P_t)_{t\geq 0}$ is compact on $L^2(\R^d;\d x)$.
\end{thm}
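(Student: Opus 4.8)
The plan is to deduce compactness of $(P_t)_{t\ge0}$ from an \emph{essential super Poincar\'e inequality} for $(\E,\F)$, namely to find a strictly positive $\psi\in L^2(\R^d;\d x)$ and a decreasing function $\beta:(0,\infty)\to(0,\infty)$ such that
$$\int_{\R^d} f(x)^2\,\d x\le r\,\E(f,f)+\beta(r)\left(\int_{\R^d}|f(x)|\psi(x)\,\d x\right)^2,\qquad r>0,\ f\in C_c^\infty(\R^d),$$
since by \cite[Theorem 0.3.9 and Theorem 3.2.1]{W05} this is equivalent to compactness of the semigroup. The first reduction is to throw away all the long jumps: since $\E(f,f)\ge \iint_{\{|x-y|<1\}}(f(x)-f(y))^2 W(x,y)\,J(x,\d y)\,\d x$, it suffices to prove the inequality with $\E$ replaced by the truncated form $\E^*$, and moreover, by Assumption~\ref{assum:w}(i), with $W(x,y)$ on $\{|x-y|<1\}$ bounded below by $V_\lambda(x)+V_\lambda(y)$, and with $J(x,\d y)$ on $B(0,1)$ bounded below by the translation-invariant kernel $\nu$. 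So the task becomes: establish a super Poincar\'e inequality for the auxiliary quadratic forms
$$\E_\lambda(f,f):=\iint_{\{|x-y|<1\}}(f(x)-f(y))^2\big(V_\lambda(x)+V_\lambda(y)\big)\,\nu(x-y)\,\d y\,\d x,$$
each of which is dominated by $\E$, and then optimize over $\lambda$.

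Next I would use a Hardy-type inequality together with a ground-state-transform / intrinsic-ultracontractivity style argument. The idea alluded to in comment (ii) of the introduction is to pick a test function $\phi(x)=(1+|x|^2)^{-\delta/2}$ with $\delta\in(0,1)$, for which one shows (Propositions~\ref{prop:pair} and \ref{prop:ratio}) that $-\mathcal L_\lambda\phi/\phi(x)\ge C$ for $|x|\ge R_0$, where $\mathcal L_\lambda$ is the formal generator of $\E_\lambda$; here $C$ can be made arbitrarily large by choosing $\lambda$ large. This furnishes, via a Hardy-type inequality (Lemma~\ref{lem:hardy-type}), a bound of the form $\int_{\{|x|\ge R\}} f^2\,\d x\le \varepsilon_\lambda\,\E_\lambda(f,f)$ outside a large ball, plus a correction term on $B(0,R)$. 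On the ball $B(0,R)$, I would use the translation-invariant lower bound $\nu$: the form $\iint (f(x)-f(y))^2\nu(x-y)\,\d y\,\d x$ is, up to localization, a L\'evy-type Dirichlet form whose symbol is $\varphi(\xi)$, and the integrability conditions $\beta_0(r)=\int e^{-r\varphi(\xi)}\,\d\xi<\infty$, $\int_r^\infty \beta_0^{-1}(s)/s\,\d s<\infty$ are exactly what is needed (by the standard Nash/super-Poincar\'e machinery, cf.\ \cite{W05}) to get a super Poincar\'e inequality on bounded sets with rate function controlled by $\beta_0$. Patching the outside-the-ball Hardy bound with the inside-the-ball super Poincar\'e bound via a smooth cutoff — and absorbing the cross terms using that $W(x,y)\ge\inf_x V_\lambda(x)>0$ — yields the full essential super Poincar\'e inequality, after replacing the $L^2$-tail part by the weighted $L^1$ term with $\psi(x)=e^{-|x|}$.

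The final bookkeeping step is the choice of $\beta(r)$: for a given $r>0$ one first picks $\lambda=\lambda(r)$ large enough that the Hardy constant outside the ball is $\le r$ (this forces the ball $B(0,R_0)=B(0,R_0(\lambda))$ to grow, but that is harmless), then applies the bounded-set super Poincar\'e inequality on that ball with its own parameter, and collects everything into a single decreasing $\beta(r)$. The condition $\inf_x V_\lambda(x)>0$ and the hypothesis $\liminf_{|x|\to\infty}U_1(x)/|x|^2=\infty$ (which in the general setting becomes $V_\lambda(x)=\lambda(1+|x|^2)$ eventually, so the coefficient grows like $|x|^2$, matching the $\delta<1$ Hardy exponent) is what makes the outside bound beat any prescribed $r$. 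I expect the main obstacle to be the \textbf{outside-the-ball Hardy estimate}: one must verify carefully that the pointwise ratio inequality $-\mathcal L_\lambda\phi/\phi\ge C$ genuinely holds for the \emph{non-local} operator with the unbounded, $x$-dependent coefficient $V_\lambda$ — the nonlocal carr\'e-du-champ computation with the weight $V_\lambda(x)+V_\lambda(y)$ produces cross terms of the form $(\phi(x)-\phi(y))(V_\lambda(x)-V_\lambda(y))$ that have no local analogue, and controlling their sign/size (using $\delta<1$ so that $\phi$ decays slowly enough and $|\nabla^2\phi|$, $|\nabla\phi|$ are integrable against $|z|^2\nu(\d z)$) is the delicate point, handled in Propositions~\ref{prop:pair} and \ref{prop:ratio}. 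Everything else — the truncation, the cutoff patching, the bounded-set estimate from $\beta_0$, and assembling $\beta$ — is routine given those propositions.
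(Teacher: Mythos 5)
Your proposal is correct and follows essentially the same route as the paper's proof: truncation to short jumps, the auxiliary forms $\E^{\lambda}$ with coefficient $W_{\lambda}(x,y)=V_{\lambda}(x)+V_{\lambda}(y)$, the test function $\phi(x)=(1+|x|^2)^{-\delta/2}$ with $\delta\in(0,1)$ feeding Lemma~\ref{lem:hardy-type} via Propositions~\ref{prop:pair} and~\ref{prop:ratio} to obtain the outside-the-ball Hardy bound, the Nash-type inequality coming from $\beta_0$ together with a smooth cutoff $\eta$ for the inside-the-ball estimate, and the final parameter choice $\lambda=\lambda(r)$, $R_0=R_0(r)$ to assemble a decreasing $\beta(r)$. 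The only minor cosmetic differences are that the paper works with the global Nash inequality \eqref{e:levy} applied to $f\eta$ rather than a genuinely localized super Poincar\'e inequality, and it explicitly uses the integrability of $\beta_0^{-1}$ (via \cite[Theorem 3.3.14]{W05}) to secure a transition density before invoking \cite[Theorems 3.1.7, 3.2.1 and 0.3.9]{W05}.
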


To prove Theorem \ref{thm:cpt},
we need to introduce a class of auxiliary quadratic forms. For any fixed $\lambda>0$,
we define $W_{\lambda}(x,y)=V_{\lambda}(x)+V_{\lambda}(y)$, and
$$
\E^{\lambda}(u,u)
=\iint_{\{0<|z|<1\}}(u(x+z)-u(x))^2W_{\lambda}(x,x+z)\,\nu(\d z)\,\d x,\quad  u\in\F.
$$
Then by Assumption \ref{assum:w},
$\E^{\lambda}(u,u)\leq \E(u,u)$ for any $u\in \F$.

For our purpose, we will extend
the domain of the quadratic form $(\E^{\lambda},\F)$.
For a pair $f$ and $g$ of Borel measurable functions on $\R^d$
such that
$$
\iint_{\{0<|z|<1\}}|f(x+z)-f(x)||g(x+z)-g(x)| W_{\lambda}(x,x+z)\,\nu(\d z)\,\d x<\infty,
$$
we let
$$
\E^{\lambda}(f,g):=\iint_{\{0<|z|<1\}}(f(x+z)-f(x))(g(x+z)-g(x))
W_{\lambda}(x,x+z)\,\nu(\d z)\,\d x.
$$
It is clear that $\E^{\lambda}(f,g)$ is well defined,
if $f,g\in \F$ (in particular, if $f,g\in C_c^\infty(\R^d)$).

\begin{prop}\label{prop:pair}
Let $\phi(x)=(1+|x|^2)^{-\delta/2}$ for some $\delta>0$.
Under Assumption {\rm \ref{assum:w}}, we have the following statements
for any $\lambda>0$.
\begin{enumerate}
\item[{\rm (i)}] For any $g\in C_c^{\infty}(\R^d)$,
$$
\iint_{\{0<|z|<1\}}|\phi(x+z)-\phi(x)||g(x+z)-g(x)|W_{\lambda}(x,x+z)\,\nu(\d z)\,\d x<\infty.
$$
In particular, $\E^\lambda(\phi, g)$ is well defined.
\item[{\rm (ii)}] For any $g\in C_c^{\infty}(\R^d)$,
$$
\iint_{\{0<|z|<1\}}|\phi(x+z)-\phi(x)-\langle \nabla \phi(x),z\rangle||g(x)|
W_{\lambda}(x,x+z)\,\nu(\d z)\,\d x<\infty
$$
and
$$
\iint_{\{0<|z|<1\}}|\langle \nabla \phi(x),z\rangle||g(x)|
|W_{\lambda}(x,x+z)-W_{\lambda}(x,x-z)|\,\nu(\d z)\,\d x<\infty.
$$
\item[{\rm (iii)}]
For any $g\in C_c^{\infty}(\R^d)$,
$$
\E^{\lambda}(\phi,g)=-2\int_{\R^d}({\cal L}_{\lambda} \phi)(x)g(x)\, \d x,
$$
where
\begin{equation}\label{eq:generator-0}
\begin{split}
{\cal L}_{\lambda}\phi(x)
&=\int_{\{0<|z|<1\}}(\phi(x+z)-\phi(x)-\langle \nabla \phi(x), z\rangle)
W_{\lambda}(x,x+z)\,\nu(\d z)\\
&\quad+\frac{1}{2}\int_{\{0<|z|<1\}}\langle \nabla \phi(x), z\rangle
(W_{\lambda}(x,x+z)-W_{\lambda}(x,x-z))\,\nu(\d z).
\end{split}
\end{equation}
In particular, ${\cal L}_{\lambda}\phi$ is locally bounded on $\R^d$.
\end{enumerate}
\end{prop}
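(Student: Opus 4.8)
The three parts rest on two elementary facts about $\phi(x)=(1+|x|^2)^{-\delta/2}$: it is Lipschitz on $\R^d$, so $\|\nabla\phi\|_\infty<\infty$, and it is of class $C^2$, so that for any compact $K\subset\R^d$ there is $C_K>0$ with $|\phi(x+z)-\phi(x)|\le\|\nabla\phi\|_\infty|z|$ and $|\phi(x\pm z)-\phi(x)\mp\langle\nabla\phi(x),z\rangle|\le C_K|z|^2$ for all $x\in K$ and $0<|z|<1$. Combined with $\int_{\{0<|z|<1\}}|z|^2\,\nu(\d z)<\infty$ coming from \eqref{e:moment}, the local boundedness of $W_\lambda$, and the compact support of $g$, these estimates settle (i) and (ii) directly. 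For (i), let $K$ be the $1$-neighbourhood of $\supp g$; then $g(x+z)-g(x)$ vanishes off $K$ and $|g(x+z)-g(x)|\le\|\nabla g\|_\infty|z|$ there, so the integrand is dominated by $C_K|z|^2\mathbf 1_K(x)$, which is $\nu(\d z)\,\d x$-integrable. For (ii), the same $K$ works: the first integrand is $\le C_K|z|^2|g(x)|$ by the $C^2$-Taylor bound, and for the second one uses $|W_\lambda(x,x+z)-W_\lambda(x,x-z)|=|V_\lambda(x+z)-V_\lambda(x-z)|\le 2|z|\sup_K|\nabla V_\lambda|$ together with $|\langle\nabla\phi(x),z\rangle|\le\|\nabla\phi\|_\infty|z|$ to obtain the bound $C_K|z|^2|g(x)|$ again.

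For (iii) the plan is to symmetrise the double integral defining $\E^\lambda(\phi,g)$ and to translate the $x$-variable. By (i) that double integral is absolutely convergent, so Fubini gives $\E^\lambda(\phi,g)=\int_{\{0<|z|<1\}}I(z)\,\nu(\d z)$ with $I(z)=\int_{\R^d}(\phi(x+z)-\phi(x))(g(x+z)-g(x))W_\lambda(x,x+z)\,\d x$; for each fixed $z$ this inner integral is absolutely convergent since $g$ has compact support, $|\phi(x+z)-\phi(x)|\le\|\nabla\phi\|_\infty|z|$, and $W_\lambda$ is locally bounded. Splitting $I(z)$ into its $g(x+z)$- and $g(x)$-pieces and substituting $x\mapsto x-z$ in the former, using translation invariance of Lebesgue measure and the identity $W_\lambda(x-z,x)=W_\lambda(x,x-z)$ (valid since $W_\lambda(a,b)=V_\lambda(a)+V_\lambda(b)$), gives
\[
I(z)=-\int_{\R^d}\Big[(\phi(x+z)-\phi(x))W_\lambda(x,x+z)+(\phi(x-z)-\phi(x))W_\lambda(x,x-z)\Big]g(x)\,\d x.
\]

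The decisive step is then to interchange $\int\nu(\d z)$ with $\int\d x$, for which the bracket above must be made $O(|z|^2)$ uniformly for $x\in K$. This is achieved by compensating with $\langle\nabla\phi(x),z\rangle$ and exploiting the cancellation $W_\lambda(x,x+z)-W_\lambda(x,x-z)=V_\lambda(x+z)-V_\lambda(x-z)$: writing $\phi(x\pm z)-\phi(x)=[\phi(x\pm z)-\phi(x)\mp\langle\nabla\phi(x),z\rangle]\pm\langle\nabla\phi(x),z\rangle$, the bracket equals
\[
[\phi(x+z)-\phi(x)-\langle\nabla\phi(x),z\rangle]W_\lambda(x,x+z)+[\phi(x-z)-\phi(x)+\langle\nabla\phi(x),z\rangle]W_\lambda(x,x-z)+\langle\nabla\phi(x),z\rangle\big(V_\lambda(x+z)-V_\lambda(x-z)\big),
\]
and each summand is $\le C_K|z|^2$ on $K$ by the Taylor bound for $\phi$ and the $C^1$ bound for $V_\lambda$. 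Hence $\iint_{\{0<|z|<1\}}|g(x)|\,|\text{bracket}|\,\nu(\d z)\,\d x<\infty$, so Fubini applies; substituting $z\mapsto-z$ in the $\nu$-integral of the second summand (legitimate since $\nu(-A)=\nu(A)$) collapses the contribution of the first two summands to $2\int_{\{0<|z|<1\}}(\phi(x+z)-\phi(x)-\langle\nabla\phi(x),z\rangle)W_\lambda(x,x+z)\,\nu(\d z)$, so that the $z$-integral of the whole bracket equals exactly $2\,{\cal L}_\lambda\phi(x)$ as in \eqref{eq:generator-0}. This yields $\E^\lambda(\phi,g)=-2\int_{\R^d}({\cal L}_\lambda\phi)(x)g(x)\,\d x$, and the local boundedness of ${\cal L}_\lambda\phi$ follows from the same $C_K|z|^2$ bounds, now with $K$ an arbitrary compact set, together with $\int_{\{0<|z|<1\}}|z|^2\,\nu(\d z)<\infty$.

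I expect the one genuine subtlety to be the bookkeeping in (iii): one must avoid writing $\int(\phi(x+z)-\phi(x))W_\lambda(x,x+z)\,\nu(\d z)$ on its own, since $\int|z|\,\nu(\d z)$ may well diverge, so the gradient compensation and the cancellation of the diagonal term $V_\lambda(x)$ in $W_\lambda(x,x+z)-W_\lambda(x,x-z)$ have to be introduced \emph{before} any interchange of integrals. All the remaining manipulations are routine applications of Fubini's theorem and Taylor's formula.
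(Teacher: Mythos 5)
Your proof is correct, and the core ideas coincide with the paper's: the Lipschitz and $C^2$ bounds for $\phi$, the translation $x\mapsto x-z$ combined with the symmetries of $\nu$ and $W_\lambda(a,b)=V_\lambda(a)+V_\lambda(b)$, and — crucially — the gradient compensation that makes the integrand $O(|z|^2)$ \emph{before} the $\nu$- and $\d x$-integrals are interchanged (since $\int_{\{0<|z|<1\}}|z|\,\nu(\d z)$ may be infinite). The one genuine difference is how the integrability bookkeeping is organized. The paper first truncates the $z$-domain to $\{\varepsilon<|z|<1\}$, where $\nu$ is finite, so that all the symmetrization and splitting can be done freely; it then produces the dominating locally bounded functions $M_1(x)$, $M_2(x)$ and passes to the limit $\varepsilon\to0$ by dominated convergence. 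You instead apply Fubini at the outset via (i), split and translate $I(z)$ for each fixed $z$ (valid because the inner $x$-integrals are absolutely convergent for each $z$ thanks to the compact support of $g$ and $|\phi(x+z)-\phi(x)|\le\|\nabla\phi\|_\infty|z|$), then regroup the bracket and apply Fubini a second time after the $C_K|z|^2$ domination is in place. Both are valid; your version dispenses with the $\varepsilon$-limit at the cost of the extra (routine) check of absolute convergence of the inner integrals. A minor further point: in part (i) the paper uses Cauchy--Schwarz against $\E^\lambda(g,g)$, whereas you dominate the integrand pointwise by $C_K|z|^2\mathbf{1}_K(x)$; either works, and yours is slightly more elementary.
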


\begin{proof}
We first prove (i).
Let $g\in C_c^{\infty}(\R^d)$.
Since $g$ has compact support,
there exists a compact set $K\subset \R^d$ such that
\begin{align*}
&\iint_{\{0<|z|<1\}}|\phi(x+z)-\phi(x)||g(x+z)-g(x)|W_{\lambda}(x,x+z)\,\nu(\d z)\,\d x\\
&=\int_K \int_{\{0<|z|<1\}}
|\phi(x+z)-\phi(x)||g(x+z)-g(x)|W_{\lambda}(x,x+z)\,\nu(\d z) \,\d x.
\end{align*}
By the mean value theorem,
there exists $c_1>0$ such that for any $x,z\in \R^d$ with $|z|<1$,
$$
|\phi(x+z)-\phi(x)|\leq \frac{c_1|z|}{(1+|x|^2)^{(\delta+1)/2}}\leq c_1|z|.
$$
Then, according to \eqref{e:moment}
and the local boundedness of $W_{\lambda}(x,y)$,
\begin{align*}
&\int_K \int_{\{0<|z|<1\}}(\phi(x+z)-\phi(x))^2
W_{\lambda}(x,x+z)\,\nu(\d z) \,\d x\\
&\leq c_2 \int_K \int_{\{0<|z|<1\}}|z|^2W_{\lambda}(x,x+z)\,\nu(\d z)
 \,\d x
 <\infty.
\end{align*}
Hence by the Cauchy-Schwarz inequality,
\begin{align*}
&\left[\int_K \int_{\{0<|z|<1\}}
|\phi(x+z)-\phi(x)||g(x+z)-g(x)|W_{\lambda}(x,x+z)\,\nu(\d z) \,\d x\right]^2\\
&\leq \int_K \int_{\{0<|z|<1\}}
(\phi(x+z)-\phi(x))^2W_{\lambda}(x,x+z)\,\nu(\d z) \,\d x\\
&\quad\times
\int_K \int_{\{0<|z|<1\}}
(g(x+z)-g(x))^2W_{\lambda}(x,x+z)\,\nu(\d z) \,\d x<\infty,
\end{align*}
which implies  (i).

We next prove (ii) and (iii) in a similar way to the proof of \cite[Theorem 2.2]{SU14}.
Fix $\varepsilon>0$.
Then, by the symmetry of $\nu(\d z)$ and $W_{\lambda}(x,y)$,
\begin{equation}\label{eq:ibp}
\begin{split}
&\iint_{\{\varepsilon<|z|<1\}}(\phi(x+z)-\phi(x))(g(x+z)-g(x))
W_{\lambda}(x,x+z)\,\nu(\d z)\,\d x\\
&=-2\iint_{\{\varepsilon<|z|<1\}}(\phi(x+z)-\phi(x))g(x)
W_{\lambda}(x,x+z)\,\nu(\d z)\,\d x\\
&=-2\iint_{\{\varepsilon<|z|<1\}}
(\phi(x+z)-\phi(x)-\langle \nabla \phi(x),z\rangle)g(x)
W_{\lambda}(x,x+z)\,\nu(\d z)\,\d x\\
&\quad-2\iint_{\{\varepsilon<|z|<1\}}\langle \nabla \phi(x),z\rangle g(x)
W_{\lambda}(x,x+z)\,\nu(\d z)\,\d x.
\end{split}
\end{equation}

By the Taylor theorem,
 there exists $c_3>0$ such that for any $x,z\in \R^d$ with $|z|<1$,
$$
|\phi(x+z)-\phi(x)-\langle\nabla \phi(x), z\rangle|
\leq \frac{c_3|z|^2}{(1+|x|^2)^{(\delta+2)/2}}\leq c_3|z|^2,
$$
which implies that
\begin{align*}
&\int_{\{\varepsilon<|z|<1\}}
|\phi(x+z)-\phi(x)-\langle \nabla \phi(x),z\rangle|
W_{\lambda}(x,x+z)\,\nu(\d z)\\
&\leq c_3
\int_{\{0<|z|<1\}}|z|^2W_{\lambda}(x,x+z)\,\nu(\d z)=:M_1(x).
\end{align*}
Note that, by
\eqref{e:moment}
and the local boundedness of $W_\lambda(x,y)$ again,
the function $M_1(x)$ is
locally bounded on $\R^d$.
Since $g$ has compact support in $\R^d$,
we have
\begin{equation}\label{eq:abs-conv}
\begin{split}
&\iint_{\{\varepsilon<|z|<1\}}|\phi(x+z)-\phi(x)-\langle \nabla \phi(x),z\rangle||g(x)|
W_{\lambda}(x,x+z)\,\nu(\d z)\,\d x\\
&\leq \int_{\R^d}M_1(x)|g(x)|\,\d x
<\infty.
\end{split}
\end{equation}

Since the measure $\nu(\d z)$ is symmetric, we have
\begin{equation}\label{e:gene-drift}
\begin{split}
&\iint_{\{\varepsilon<|z|<1\}}\langle \nabla \phi(x),z\rangle
g(x)W_{\lambda}(x,x+z)\,\nu(\d z)\,\d x\\
&=\frac{1}{2}\iint_{\{\varepsilon<|z|<1\}}
\langle \nabla \phi(x),z\rangle g(x)
(W_{\lambda}(x,x+z)-W_{\lambda}(x,x-z))\,\nu(\d z)\,\d x.
\end{split}
\end{equation}
By the mean value theorem and $V_\lambda\in C^1(\R^d)$,
there exists a locally bounded nonnegative function
$c_{\lambda}(x,z)$ on $\R^d\times\R^d$
such that for any $x,z\in \R^d$ with $|z|<1$,
$$
|W_{\lambda}(x,x+z)-W_{\lambda}(x,x-z)|
=|V_{\lambda}(x+z)-V_{\lambda}(x-z)|
\leq c_{\lambda}(x,z)|z|,
$$
which implies that
\begin{align*}
&\int_{\{\varepsilon<|z|<1\}}
|\langle \nabla \phi(x),z\rangle|
|W_{\lambda}(x,x+z)-W_{\lambda}(x,x-z)|\,\nu(\d z)\\
&\leq c_4
\int_{\{0<|z|<1\}}|z|^2
c_{\lambda}(x,z)\,\nu(\d z)=:M_2(x).
\end{align*}
Then, thanks to \eqref{e:moment}
again,
the function $M_2(x)$ is also locally bounded on $\R^d$.
In particular,
\begin{equation}\label{e:abs-conv-2}
\begin{split} 
&\iint_{\{\varepsilon<|z|<1\}}
|\langle \nabla \phi(x),z\rangle| |g(x)|
|W_{\lambda}(x,x+z)-W_{\lambda}(x,x-z)|\,\nu(\d z)\,\d x\\
&\leq \int_{\R^d}M_2(x)|g(x)|\,\d x<\infty.
\end{split}
\end{equation} 
We thus have the assertion (ii) by letting $\varepsilon\rightarrow 0$
in \eqref{eq:abs-conv} and \eqref{e:abs-conv-2}.
We also obtain the assertion (iii) by
 \eqref{eq:ibp}, \eqref{e:gene-drift} and (ii).
\end{proof}

\begin{prop}\label{prop:ratio}
For any $\delta\in (0,1)$ and $C>0$,
there exist $\lambda>0$ and $R_0>0$
such that for any $x\in B(0,R_0)^c$,
the function $\phi(x)=(1+|x|^2)^{-\delta/2}$ satisfies
\begin{equation}\label{eq:ratio}
\frac{-{\cal L}_{\lambda}\phi}{\phi}(x)\geq C.
\end{equation}
In particular,
the function $-{\cal L}_{\lambda}\phi/\phi$
is bounded from below.
\end{prop}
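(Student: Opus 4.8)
The plan is to substitute the explicit form of $W_{\lambda}$ at infinity into \eqref{eq:generator-0} and to extract the leading-order behaviour of $-{\cal L}_{\lambda}\phi/\phi$ as $|x|\to\infty$. Fix $\delta\in(0,1)$ and, for the moment, fix also $\lambda>0$; let $R_0=R_0(\lambda)$ be the constant from Assumption \ref{assum:w}(i). If $|x|>R_0+1$ and $|z|<1$, then $|x\pm z|>R_0$, so $W_{\lambda}(x,x+z)=\lambda\big(2+|x|^2+|x+z|^2\big)$ and $W_{\lambda}(x,x+z)-W_{\lambda}(x,x-z)=\lambda\big(|x+z|^2-|x-z|^2\big)=4\lambda\langle x,z\rangle$. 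Using $\nabla\phi(x)=-\delta(1+|x|^2)^{-\delta/2-1}x$, and setting
$$u=u(x,z):=\frac{2\langle x,z\rangle+|z|^2}{1+|x|^2}=\frac{1+|x+z|^2}{1+|x|^2}-1\,(>-1),\qquad F(t):=(1+t)^{-\delta/2}-1+\tfrac{\delta}{2}\,t,$$
a direct computation — using $W_{\lambda}(x,x+z)=\lambda(1+|x|^2)(2+u)$ and $\langle x,z\rangle/(1+|x|^2)=u/2-|z|^2/(2(1+|x|^2))$, so that $(\phi(x+z)-\phi(x)-\langle\nabla\phi(x),z\rangle)/\phi(x)=F(u)-\delta|z|^2/(2(1+|x|^2))$ — gives, for $|x|>R_0+1$,
\begin{align*}
\frac{-{\cal L}_{\lambda}\phi(x)}{\phi(x)}
&=-\lambda(1+|x|^2)\int_{\{0<|z|<1\}}F(u)(2+u)\,\nu(\d z)+\frac{\lambda\delta}{2}\int_{\{0<|z|<1\}}|z|^2(2+u)\,\nu(\d z)\\
&\quad+2\lambda\delta\,\frac{\langle Qx,x\rangle}{1+|x|^2},
\end{align*}
where $Q:=\int_{\{0<|z|<1\}}zz^{\top}\,\nu(\d z)$ is a finite symmetric matrix by \eqref{e:moment}. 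Since $F(0)=F'(0)=0$ and $F''>0$, we have $F\ge0$ and $F(t)=\tfrac{\delta(\delta+2)}{8}t^2+O(|t|^3)$ near $t=0$.

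Next I would pass to the limit $|x|\to\infty$. Since $|z|<1$, one has $|u|\le 3|z|/|x|\le3/|x|$ for $|x|\ge1$ (uniformly in $z$), while $(1+|x|^2)u^2\to4\langle\hat x,z\rangle^2$ with $\hat x:=x/|x|$, and $(1+|x|^2)F(u)(2+u)\le C_{\delta}|z|^2$ uniformly for $|x|$ large; so dominated convergence applies to all three integrals. Hence, along any sequence $x_n$ with $|x_n|\to\infty$ and $\hat x_n\to\hat x_0\in\Ss^{d-1}$,
\begin{align*}
\frac{-{\cal L}_{\lambda}\phi(x_n)}{\phi(x_n)}
&\longrightarrow -\lambda\delta(\delta+2)\langle Q\hat x_0,\hat x_0\rangle+\lambda\delta\int_{\{0<|z|<1\}}|z|^2\,\nu(\d z)+2\lambda\delta\langle Q\hat x_0,\hat x_0\rangle\\
&=\lambda\delta\big(m_2-\delta\langle Q\hat x_0,\hat x_0\rangle\big),
\end{align*}
where $m_2:=\int_{\{0<|z|<1\}}|z|^2\,\nu(\d z)$ and $\langle Q\hat x_0,\hat x_0\rangle=\int_{\{0<|z|<1\}}\langle\hat x_0,z\rangle^2\,\nu(\d z)$. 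By Cauchy--Schwarz, $\langle Q\hat x_0,\hat x_0\rangle\le m_2$; moreover $m_2\in(0,\infty)$ — finite by \eqref{e:moment}, and positive since otherwise $\varphi\equiv0$ and $\beta_0\equiv\infty$, contradicting Assumption \ref{assum:w}(ii). Thus the limit is at least $\lambda\delta(1-\delta)m_2>0$, and a standard compactness argument over $\Ss^{d-1}$ upgrades this to $\liminf_{|x|\to\infty}\big(-{\cal L}_{\lambda}\phi(x)/\phi(x)\big)\ge\lambda\delta(1-\delta)m_2$.

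To finish, I would choose $\lambda>0$ so large that $\lambda\delta(1-\delta)m_2\ge2C$; then there exists $R_0>0$ (depending on this $\lambda$, $\delta$ and $C$) with $-{\cal L}_{\lambda}\phi(x)/\phi(x)\ge C$ for all $x\in B(0,R_0)^c$, which is \eqref{eq:ratio}. The last assertion — boundedness from below on all of $\R^d$ — then follows because ${\cal L}_{\lambda}\phi$ is locally bounded by Proposition \ref{prop:pair}(iii) and $\phi$ is bounded away from $0$ on the compact ball $\overline{B(0,R_0)}$.

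The step I expect to be the main obstacle is controlling the ``curvature'' term $-\lambda(1+|x|^2)\int F(u)(2+u)\,\nu(\d z)$: it is negative and, after the $(1+|x|^2)$-rescaling, of the same order $\lambda$ as the two positive terms, so crude estimates will not do. One must keep track of the exact constants and exploit both the drift part of ${\cal L}_{\lambda}$ — which produces the $+2\lambda\delta\langle Q\hat x_0,\hat x_0\rangle$ that cancels most of the $-\lambda\delta(\delta+2)\langle Q\hat x_0,\hat x_0\rangle$ coming from the curvature term — and the strict inequality $\delta<1$, so as to be left with the genuinely positive remainder $\lambda\delta(1-\delta)m_2$. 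The reduction to the explicit identity, the uniform bound $|u|\le3|z|/|x|$, and the dominated-convergence and sphere-compactness arguments are, by comparison, routine.
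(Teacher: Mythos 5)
Your proof is correct, and the underlying idea coincides with the paper's: substitute the explicit form of $W_\lambda$ for large $|x|$, Taylor-expand $\phi$ around $x$, and observe the exact cancellation between the negative second-order ``curvature'' contribution $-\lambda\delta(\delta+2)\langle Q\hat x_0,\hat x_0\rangle$, the positive term $\lambda\delta m_2$, and the drift term $+2\lambda\delta\langle Q\hat x_0,\hat x_0\rangle$, then finish with the Cauchy--Schwarz bound $\langle Q\hat x_0,\hat x_0\rangle\le m_2$ and the hypothesis $\delta<1$. These are precisely the quantities the paper denotes $F(x)$ and $\gamma_0$, and the crucial inequality $F(x)\le\gamma_0$ combined with $\delta<1$ is the same pivot in both arguments, as the paper itself flags in Remark 2.7.

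The execution differs in two respects, and I think it is worth recording them. First, you reduce the Taylor remainder to the scalar function $F(t)=(1+t)^{-\delta/2}-1+\tfrac{\delta}{2}t$ of the single variable $u=\big(1+|x+z|^2\big)/\big(1+|x|^2\big)-1$, which packages the paper's integral-form Taylor remainder over $s\in[0,1]$ (equations \eqref{e:bound-1}--\eqref{e:bound}) into one line of algebra and makes the asymptotic constants visible at a glance. Second, you obtain \eqref{eq:ratio} by a limiting argument --- dominated convergence along $|x|\to\infty$ plus compactness of $\Ss^{d-1}$ and a subsequence extraction --- whereas the paper carries out a non-asymptotic $\varepsilon$-estimate (\eqref{eq:comparison-u}--\eqref{eq:comparison-l} onward) that yields an explicit bound on $B(0,R_0)^c$ and an explicit formula for $\lambda$ in terms of $C$, $\delta$, $\varepsilon$, $\gamma_0$. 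Your route is conceptually cleaner in that it exhibits the exact limit $\lambda\delta\bigl(m_2-\delta\langle Q\hat x_0,\hat x_0\rangle\bigr)$; the paper's route is more quantitative. One small point you could spell out: the dominated-convergence step requires the uniform-in-$z$ bound $(1+|x|^2)F(u)(2+u)\le C_\delta|z|^2$ for $|x|$ large, which follows from $|F(t)|\le C_\delta t^2$ on a neighbourhood of $0$ together with $|u|\le 3|z|/|x|$; you state this but it is the one step where a careless reader might worry about uniformity, so a sentence of justification would not hurt.
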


\begin{proof}
According to the proof of Proposition \ref{prop:pair},
we know that ${\cal L}_{\lambda}\phi$
is locally bounded.
So it suffices to prove \eqref{eq:ratio}.

Let $\delta>0$. Then for any $x,z\in \R^d$ with $0<|z|<1$,
we have by the Taylor theorem,
$$
\phi(x+z)-\phi(x)-\langle \nabla \phi(x), z\rangle
=\delta\int_0^1 (1-s)
\frac{(\delta+2)\langle x+sz,z\rangle^2-(1+|x+sz|^2)|z|^2}
{(1+|x+sz|^2)^{\delta/2+2}}\,\d s.
$$
Then, the Fubuni theorem yields that for any $\lambda>0$,
\begin{align}\label{e:bound-1}
&\int_{\{0<|z|<1\}}(\phi(x+z)-\phi(x)-\langle \nabla \phi(x), z\rangle)
W_{\lambda}(x,x+z)\,\nu(\d z)\nonumber\\
&=\delta\int_0^1(1-s)
 \int_{\{0<|z|<1\}}
\frac{(\delta+2)\langle x+sz,z\rangle^2-(1+|x+sz|^2)|z|^2}{(1+|x+sz|^2)^{\delta/2+2}}
W_{\lambda}(x,x+z)\,\nu(\d z) \,\d s\nonumber\\
&=\delta(\delta+2)\int_0^1(1-s)
 \int_{\{0<|z|<1\}}\frac{\langle x+sz,z\rangle^2}{(1+|x+sz|^2)^{\delta/2+2}}
W_{\lambda}(x,x+z)\,\nu(\d z) \,\d s\nonumber\\
&\quad -\delta\int_0^1(1-s) \int_{\{0<|z|<1\}}\frac{|z|^2}{(1+|x+sz|^2)^{\delta/2+1}}
W_{\lambda}(x,x+z)\,\nu(\d z) \,\d s\nonumber\\
&=:{\rm (I)}-{\rm (II)}.
\end{align}

For any $\delta>0$, $\varepsilon\in (0,1)$ and $\lambda>0$,
there exists a constant $R_0>0$ by Assumption \ref{assum:w}(i)
such that
for any $x\in B(0,R_0)^c$ and $z\in \R^d$ with $ 0<|z|<1 $,
\begin{equation}\label{eq:comparison1}
W_\lambda(x,x\pm z)=\lambda(1+|x|^2)+\lambda(1+|x\pm z|^2){\rm ;}
\end{equation}
in particular, for any $x\in B(0,R_0)^c$, $z\in \R^d$ with $0<|z|<1$, and $s\in (0,1)$,
\begin{equation}\label{eq:comparison-u}
\frac{W_{\lambda}(x,x+z)}{(1+|x+sz|^2)^{\delta/2+2}}
\le \ \frac{2\lambda(1+\varepsilon)}{(1+|x|^2)^{\delta/2+1}}
\end{equation}
and
\begin{equation}\label{eq:comparison-l}
\frac{W_{\lambda}(x,x+z)}{(1+|x+sz|^2)^{\delta/2+1}}
\ge\frac{2\lambda(1-\varepsilon)}{(1+|x|^2)^{\delta/2}}
=2\lambda(1-\varepsilon)\phi(x).
\end{equation}
Then, by \eqref{eq:comparison-u},
\begin{equation}\label{eq:i-bound-1}
{\rm (I)}
\leq \delta(\delta+2)\frac{2\lambda(1+\varepsilon) 
}{(1+|x|^2)^{\delta/2+1}}
\int_0^1(1-s) \left(\int_{\{0<|z|<1\}}
\langle x+sz,z\rangle^2\,\nu(\d z)\right) \,\d s\\
=:{\rm (I)_*}.
\end{equation}

By the symmetry of the measure $\nu(\d z)$,
$$
\int_{\{0<|z|<1\}}\langle x,z\rangle |z|^2\,\nu(\d z)=0.
$$
Since
$$
\langle x+sz,z\rangle^2
=(\langle x, z\rangle+s|z|^2)^2
=\langle x,z\rangle^2+2s\langle x,z\rangle |z|^2
+s^2|z|^4,
$$
we have
$$
\int_{\{0<|z|<1\}}\langle x+sz,z\rangle^2\,\nu(\d z)
=F(x)|x|^2+c_1s^2,
$$
where
$$
F(x)=\frac{1}{|x|^2}\int_{\{0<|z|<1\}}\langle x,z \rangle^2\,\nu(\d z),
\quad
c_1=\int_{\{0<|z|<1\}}|z|^4\,\nu(\d z).
$$
Therefore, for any $s\in (0,1)$ and $x\in \R^d$,
\begin{align*}
\int_0^1(1-s)
\left(\int_{\{0<|z|<1\}}\langle x+sz,z\rangle^2\,\nu(\d z)\right) \,\d s
=\frac{F(x)}{2}|x|^2+\frac{c_1}{12}.
\end{align*}
Then, by \eqref{eq:i-bound-1}, we obtain for any $x\in B_0(0,R_0)^c$,
\begin{equation}\label{eq:i-bound-2}
\begin{split}
{\rm (I)}\leq {\rm (I)_*}
&\leq \delta(\delta+2)
\frac{2\lambda(1+\varepsilon)
}{(1+|x|^2)^{\delta/2+1}}
\left(\frac{F(x)}{2}|x|^2+\frac{c_1}{12}\right)\\
&\leq  \lambda (1+\varepsilon)\delta(\delta+2)
\left(F(x)+\frac{c_1}{6|x|^2}\right)\phi(x).
\end{split}
\end{equation}

Let
$$
\gamma_0=\int_{\{0<|z|<1\}}|z|^2\,\nu(\d z).
$$
Then, by \eqref{eq:comparison-l},
we have for any $x\in B(0,R_0)^c$,
\begin{equation}\label{e:bound-2}
{\rm (II)}
\ge 2\lambda\delta(1-\varepsilon)\phi(x)
\int_0^1(1-s)  \,\d s\left(\int_{\{0<|z|<1\}}|z|^2\,\nu(\d z)\right)
=\lambda\delta\gamma_0(1-\varepsilon)\phi(x).
\end{equation}
Combining this with \eqref{eq:i-bound-2},
we get that for any $x\in B(0,R_0)^c$,
\begin{equation}\label{eq:upper}
\begin{split}
&\int_{\{0<|z|<1\}}(\phi(x+z)-\phi(x)-\langle \nabla \phi(x), z\rangle)
W_{\lambda}(x,x+z)\,\nu(\d z)\\
&={\rm (I)}-{\rm (II)}\\
&\leq \lambda \delta
\left\{(1+\varepsilon)(\delta+2)\left(F(x)+\frac{c_1}{6|x|^2}\right)-(1-\varepsilon)\gamma_0\right\}\phi(x).
\end{split}
\end{equation}

Since, by \eqref{eq:comparison1},
\begin{align*}
W_{\lambda}(x,x+z)-W_{\lambda}(x,x-z)
=\lambda(1+|x+z|^2)-\lambda(1+|x-z|^2)
=4\lambda\langle x,z\rangle,
\end{align*}
we have
\begin{align*}
\int_{\{0<|z|<1\}}\langle x, z\rangle
(W_{\lambda}(x,x+z)-W_\lambda(x,x-z))\,\nu(\d z)
&=4\lambda\int_{\{0<|z|<1\}}\langle x, z\rangle^2 \,\nu(\d z)\\
&= 4\lambda F(x)|x|^2,
\end{align*}
which implies that  for any $x\in B(0,R_0)^c$,
\begin{equation}\label{e:drift-bound}
\begin{split}
&\frac{1}{2}\int_{\{0<|z|<1\}}\langle\nabla \phi(x),z\rangle
(W_{\lambda}(x,x+z)-W_{\lambda}(x,x-z))\,\nu(\d z)\\
&=-\frac{\delta}{2(1+|x|^2)^{\delta/2+1}}
\int_{\{0<|z|<1\}}\langle x, z\rangle
(W_{\lambda}(x,x+z)-W_{\lambda}(x,x-z))\,\nu(\d z)\\
&\leq -2\lambda \delta (1-\varepsilon)F(x)\phi(x).
\end{split}
\end{equation}
The last inequality above holds true
due to the fact that we can choose $R_0$ large enough if necessary.
Putting \eqref{e:drift-bound}
and \eqref{eq:upper} into \eqref{eq:generator-0},
we get for any $x\in B(0,R_0)^c$
\begin{equation}\label{e:bound}
\begin{split}
{\cal L}_{\lambda}\phi(x)
&\leq \lambda \delta
\left\{(1+\varepsilon)(\delta+2)\left(F(x)+\frac{c_1}{6|x|^2}\right)
-(1-\varepsilon)(\gamma_0+2F(x))\right\}\phi(x)\\
&=\lambda \delta
\left\{((1+\varepsilon)\delta+4\varepsilon)F(x)+\frac{c_1(1+\varepsilon)(\delta+2)}{6|x|^2}
-(1-\varepsilon)\gamma_0\right\}\phi(x)\\
&\leq \lambda \delta
\left\{((1+\varepsilon)\delta+5\varepsilon-1)\gamma_0+\frac{c_1(1+\varepsilon)(\delta+2)}{6|x|^2}\right\}\phi(x)\\
&\leq \lambda \delta\gamma_0
\left\{(1+\varepsilon)\delta+6\varepsilon-1\right\}\phi(x).
\end{split}
\end{equation}
Here
in the second inequality
we used the fact that $F(x)\leq \gamma_0$ for any $x\in B(0,R_0)^c$,
and the last inequality holds true again
because
we can choose $R_0$ large enough if necessary.

For any $\delta\in (0,1)$,
we can take $\varepsilon\in (0,1)$
so small that
$$
(1+\varepsilon)\delta+6\varepsilon-1<0.
$$
Then, \eqref{eq:ratio} follows  by letting in \eqref{e:bound}
$$
\lambda
=\frac{C}{\delta\gamma_0\{(1-6\varepsilon)-(1+\varepsilon)\delta\}}.
$$
Hence the proof is complete.
\end{proof}

\begin{rem}\label{e:remark-key}\rm
As seen from \eqref{e:bound-1}, \eqref{e:bound-2} and \eqref{e:bound} above, the negativity of ${\cal L}_{\lambda}\phi$ is mainly due to
the estimate for the term ${\rm (II)}$.
Furthermore,  ${\rm (II)}$ is deduced from the Taylor formula for the test function $\phi$, and roughly speaking it comes from the expansion term
$\frac{\partial ^2 \phi 
}{\partial x_i\partial x_j}$. 
This indicates that, for the generator 
${\cal L}_\lambda$ of the quadratic form $(\E^{\lambda},\F)$ with finite range jumping kernel, the estimate of ${\cal L}_\lambda\phi(x)$ for $|x|$ large enough is similar to the second order elliptic operator acting on $\phi$.
We also note that,
the condition $\delta\in (0,1)$ and the
inequality $F(x)\leq \gamma_0$ are crucial for the proof of Proposition \ref{prop:ratio}.
\end{rem}

We also need the following Hardy type inequality
for the Dirichlet form $(\E,\F)$.
\begin{lem}\label{lem:hardy-type}
Let $\lambda>0$ and $\phi(x)=(1+|x|^2)^{-\delta/2}$ for some $\delta>0$.
If the function $-{\cal L}_{\lambda}\phi/\phi$ is bounded from below,
then for any $f\in \F$,
$$\E(f,f)\geq 2\int_{\R^d}\frac{-{\cal L}_{\lambda}\phi}{\phi}f^2\,\d x.$$
\end{lem}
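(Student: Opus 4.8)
The plan is to deduce this Hardy-type inequality from a ground state (Doob) transform carried out at the level of the auxiliary form $\E^{\lambda}$. Since $\E^{\lambda}(f,f)\le\E(f,f)$ for every $f\in\F$, it suffices to show
$$\E^{\lambda}(f,f)\ge 2\int_{\R^d}\frac{-{\cal L}_{\lambda}\phi}{\phi}f^2\,\d x,$$
and since $\F=\overline{C_c^{\infty}(\R^d)}^{\|\cdot\|_{\E_1}}$, I would first establish this for $f\in C_c^{\infty}(\R^d)$ and then pass to the limit. The algebraic heart of the matter is the elementary identity: for all real $u,v$ and all $a,b>0$,
$$(u-v)^2-(a-b)\Big(\frac{u^2}{a}-\frac{v^2}{b}\Big)=\Big(\sqrt{b/a}\,u-\sqrt{a/b}\,v\Big)^2\ge0.$$
Applying this with $u=f(x+z)$, $v=f(x)$, $a=\phi(x+z)$, $b=\phi(x)$ (admissible since $\phi>0$), multiplying by $W_{\lambda}(x,x+z)\ge0$ and integrating in $\nu(\d z)\,\d x$ gives $\E^{\lambda}(f,f)\ge\E^{\lambda}(\phi,f^2/\phi)$.

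Next I would check that the right-hand side is meaningful and identify it. For $f\in C_c^{\infty}(\R^d)$ the function $g:=f^2/\phi=f^2(1+|x|^2)^{\delta/2}$ again lies in $C_c^{\infty}(\R^d)$, so by Proposition \ref{prop:pair}(i) the quantity $\E^{\lambda}(\phi,g)$ is an absolutely convergent integral; together with $\E^{\lambda}(f,f)\le\E(f,f)<\infty$ this justifies integrating the pointwise inequality above term by term. Proposition \ref{prop:pair}(iii), applied to this $g$, then yields
$$\E^{\lambda}(\phi,f^2/\phi)=-2\int_{\R^d}({\cal L}_{\lambda}\phi)(x)\frac{f(x)^2}{\phi(x)}\,\d x=2\int_{\R^d}\frac{-{\cal L}_{\lambda}\phi}{\phi}(x)f(x)^2\,\d x,$$
the integral being finite because ${\cal L}_{\lambda}\phi$ is locally bounded and $f$ has compact support. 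Combining the two displays proves the asserted inequality for all $f\in C_c^{\infty}(\R^d)$.

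Finally I would remove the restriction $f\in C_c^{\infty}(\R^d)$. Given $f\in\F$, pick $f_n\in C_c^{\infty}(\R^d)$ with $\|f_n-f\|_{\E_1}\to0$, and, passing to a subsequence, assume in addition $f_n\to f$ a.e. Since $(\E,\F)$ is closed, $\E(f_n,f_n)\to\E(f,f)$. Write $h:=-{\cal L}_{\lambda}\phi/\phi$; by hypothesis $h\ge-c$ for some $c\ge0$, so $h+c\ge0$ and Fatou's lemma gives $\liminf_n\int(h+c)f_n^2\,\d x\ge\int(h+c)f^2\,\d x$, while $\int f_n^2\,\d x\to\int f^2\,\d x$ by $L^2$-convergence; hence $\liminf_n\int h f_n^2\,\d x\ge\int h f^2\,\d x$. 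Taking $\liminf$ in $\E(f_n,f_n)\ge2\int h f_n^2\,\d x$ then finishes the proof. The only genuinely delicate point is this last step: the ratio $-{\cal L}_{\lambda}\phi/\phi$ need not be bounded above (indeed $\phi\to0$ at infinity), so one must exploit the one-sided bound together with Fatou's lemma rather than pass to the limit directly; everything else is a routine combination of the pointwise ground-state inequality with Proposition \ref{prop:pair}.
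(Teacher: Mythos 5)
Your proof is correct and follows essentially the same route as the paper's: the pointwise ground-state inequality giving $\E^\lambda(f,f)\ge\E^\lambda(\phi,f^2/\phi)$, the identification via Proposition~\ref{prop:pair}(iii), the domination $\E^\lambda\le\E$, and the Fatou argument for general $f\in\F$. The only cosmetic differences are that you write the pointwise step as an exact identity with a visible nonnegative square (rather than the paper's AM--GM inequality, which yields the same bound), and that you make explicit the one-sided bound plus $L^2$-convergence needed to apply Fatou, which the paper leaves implicit.
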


\begin{proof}
We first assume that $f\in C_c^{\infty}(\R^d)$.
Since $f^2/\phi\in C_c^{\infty}(\R^d)$,
we have  by Proposition \ref{prop:pair}(iii),
\begin{equation}\label{eq:generator}
\begin{split}
&\int_{\R^d}\frac{-{\cal L}_{\lambda}\phi}{\phi}(x)f(x)^2\,\d x\\
&
=\int_{\R^d}(-{\cal L}_{\lambda}\phi)(x)\frac{f^2}{\phi}(x)\,\d x=\frac{1}{2}\E^{\lambda}(\phi,f^2/\phi)\\
&=\frac{1}{2}\iint_{\{0<|z|<1\}}(\phi(x+z)-\phi(x))
\left(\frac{f^2}{\phi}(x+z)-\frac{f^2}{\phi}(x)\right)W_{\lambda}(x,x+z)
\,\nu(\d z)\,\d x.
\end{split}
\end{equation}
Because for any $x,y\in \R^d$,
\begin{align*}
(\phi(y)-\phi(x))\left(\frac{f^2}{\phi}(y)-\frac{f^2}{\phi}(x)\right)
&=f(y)^2-\left(f(x)^2\frac{\phi(y)}{\phi(x)}
+f(y)^2\frac{\phi(x)}{\phi(y)}\right)+f(x)^2\\
&\leq f(y)^2-2|f (y)||f (x)|+f(x)^2\leq (f(y)-f(x))^2,
\end{align*}
we have by \eqref{eq:generator} and  Assumption \ref{assum:w},
$$
\int_{\R^d}\frac{-{\cal L}_{\lambda}\phi}{\phi}(x)f(x)^2\,\d x
\leq \frac{1}{2}\iint_{\{0<|z|<1\}}(f(x+z)-f(x))^2
W_{\lambda}(x,x+z)\,\nu(\d z)\,\d x\leq \frac{1}{2}\E(f,f).
$$

We next assume that $f\in {\cal F}$.
Take a sequence $\{f_n\}_{n\ge1}\subset  C_c^{\infty}(\R^d)$
such that $\|f_n-f\|_{\E_1}\rightarrow 0$ as $n\rightarrow\infty$.
Let  $\{f_{n_k}\}_{k\ge1}$ be a subsequence such that $f_{n_k}\rightarrow f$, a.e.
Since the function $-{\cal L}_{\lambda}\phi/\phi$ is bounded from below
by assumption,
we get by an application of Fatou's lemma,
$$\E(f,f)=\lim_{k\rightarrow\infty}\E(f_{n_k},f_{n_k})
\geq 2
\liminf_{k\rightarrow\infty}\int_{\R^d}
\frac{-{\cal L}_{\lambda}\phi}{\phi}(x)f_{n_k}(x)^2\,\d x
\geq 2\int_{\R^d}\frac{-{\cal L}_{\lambda}\phi}{\phi}(x)f(x)^2\,\d x.$$
This completes the proof.
\end{proof}

Now, we are in a position to present the
\begin{proof}[Proof of Theorem {\rm \ref{thm:cpt}}]
We first note that the Nash-type inequality holds for $(\E,\F)$.
In fact, let $Z:=(Z_t)_{t\ge0}$ be a L\'evy process on $\R^d$
with L\'evy measure ${\bf 1}_{\{0<|z|<1\}}\,\nu(\d z)$.
According to \cite[Proposition 4.1]{VS13} and Assumption \ref{assum:w}(ii),
the process $Z$ has a transition density function $p(t,x)$
with respect to the Lebesgue measure
such that for all $t>0$,
$$\|p(t,\cdot)\|_\infty \le
(2\pi)^{-d/2}\int_{\R^d} e^{-t \varphi(\xi)}\,\d \xi\le \beta_0(t)<\infty.$$
This along with \cite[Theorem 3.3.15]{W05} yields that
for any $r>0$ and $f\in C_c^\infty(\R^d)$,
\begin{equation}\label{e:levy}
\int_{\R^d} f(x)^2\,\d x
\le \frac{r}{2} \iint_{ \{0<|z|<1\} } {(f(x+z)-f(x))^2}\nu(\d y)\d x
+\beta_0(r)\left(\int_{\R^d}|f(x)|\,\d x\right)^2.
\end{equation}
According to Assumption \ref{assum:w},
we further know that there is a constant $c_1>0$ such that
for any $r>0$ and $f\in C_c^\infty(\R^d)$,
$$\int_{\R^d} f(x)^2\,\d x
\le c_1r\E(f,f)+\beta_0(r)\left(\int_{\R^d}|f(x)|\,\d x\right)^2.$$
Replacing $r$ with  $r/c_1$ in the inequality above,
we see that  for all $r>0$ and $f\in C_c^\infty(\R^d)$,
$$\int_{\R^d} f(x)^2\,\d x
\le r\E(f,f)+\beta_0(r/c_1)\left(\int_{\R^d}|f(x)|\,\d x\right)^2.$$
In particular,  by Assumption \ref{assum:w}(ii) and \cite[Theorem 3.3.14]{W05},
the associated semigroup $(P_t)_{t\ge0}$ also has a transition density function with respect to the Lebesgue measure.

We next establish the essential super Poincar\'e inequality for $(\E,\F)$.
Let $\phi(x)=(1+|x|^2)^{-\delta/2}$ for some fixed $\delta\in (0,1)$.
According to Proposition \ref{prop:ratio},
for any $C>0$,
we can take positive constants
$\lambda:=\lambda(C)$, $C_0:=C_0(C)$ and $R_0:=R_0(C)$
so that for all $x\in \R^d$,
$$\frac{-{\cal L}_{\lambda}\phi}{\phi}(x)
\ge C{\bf 1}_{B(0,R_0)^c}(x)-C_0{\bf 1}_{B(0,R_0)}(x).$$
This along with Lemma \ref{lem:hardy-type} yields that
for any $f\in C_c^\infty(\R^d)$,
\begin{align*}
C\int_{B(0,R_0)^c}f(x)^2\,\d x
&\le \int_{\R^d}\frac{-{\cal L}_{\lambda}\phi}{\phi}(x)f(x)^2\,\d x
+C_0\int_{B(0,R_0)}f(x)^2\,\d x\\
&\le \frac{1}{2}\E(f,f)+C_0\int_{B(0,R_0)}f(x)^2\,\d x,
\end{align*}
whence
\begin{align}\label{eq:poincare}
\int_{B(0,R_0)^c}f(x)^2\,\d x
\le  \frac{1}{2C}\E(f,f)+\frac{C_0}{C}\int_{B(0,R_0)}f(x)^2\,\d x.
\end{align}

On the other hand,
let $\eta:=\eta_{R_0}\in C_c^\infty(\R^d)$ take values in $[0,1]$
so that $\eta=1$ on $B(0,R_0)$ and $\eta=0$ on $B(0,R_0+1)^c$.
In particular, there exists a constant $c_2>0$, independently of $R_0$, such that
for any $x,z\in \R^d$,
\begin{equation}\label{eq:eta}
|\eta(x+z)-\eta(x)|\leq c_2|z|.
\end{equation}
Then, by \eqref{e:levy},
we have for any $s>0$ and $f\in C_c^\infty(\R^d)$,
\begin{equation}\label{e:bound-0}
\begin{split}
 \int_{B(0,R_0)}f(x)^2\,\d x
&\le  \int_{\R^d} (f(x)\eta(x)) ^2 \,\d x \\
&\le \frac{s}{2}   \iint_{\{ 0<|z|<1 \}} {(f(x+z)\eta(x+z) -f(x)\eta(x))^2}\,\nu(\d z)\,\d x\\
&\quad+\beta_0(s)\left(\int_{\R^d}|f(x)\eta(x)|\,\d x\right)^2\\
&\le  s   \iint_{ \{0<|z|<1 \}} {(f(x+z)-f(x))^2}\,\nu(\d z)\,\d x\\
&\quad+ s\int_{\R^d} f(x)^2
\left(\int_{ \{0<|z|<1 \}} {(\eta(x+z) -\eta(x))^2}\,\nu(\d z)\right)\,\d x \\
&\quad+\beta_0(s)\left(\int_{B(0, R_0+1)}|f(x)|\,\d x\right)^2\\
&\le c_3 s \E (f,f) +c_3 s \int_{\R^d}f(x)^2\,\d x
+\beta_0(s)\left(\int_{B(0, R_0+1)}|f(x)|\,\d x\right)^2,
\end{split}
\end{equation}
where in the last inequality we used
\eqref{eq:eta}
and \eqref{e:moment}
to show that
$$\sup_{x\in \R^d}\int_{ \{0<|z|<1 \}} {(\eta(x+z)-\eta(x))^2}\,\nu(\d z)
\leq c_3\int_{ \{0<|z|<1\} } |z|^2\,\nu(\d z)<\infty.$$
Combining \eqref{e:bound-0}
with \eqref{eq:poincare},
we know that for any $f\in C_c^\infty(\R^d)$ and $s>0$,
\begin{align*}
\int_{\R^d} f(x)^2\,\d x
&\le \left(\frac{1}{2C}+c_3s\right)\E(f,f)+c_3 s \int_{\R^d} f(x)^2\,\d x \\
&\quad +\beta_0(s)\left(\int_{B(0, R_0+1)}|f(x)|\,\d x\right)^2
+\frac{C_0}{C}\int_{B(0,R_0)}f(x)^2\,\d x\\
&\le  \left\{\frac{1}{2C}+c_3s\left(1+\frac{C_0}{C} \right)\right\}\E(f,f)
+c_3s\left(1+\frac{C_0}{C}\right)\int_{\R^d} f(x)^2\,\d x\\
&\quad +\beta_0(s)\left(1+\frac{C_0}{C}\right)
\left(\sup_{z\in B(0,R_0+1)}\psi(z)^{-1}\right)
\left(\int_{\R^d} |f(x)|\psi(x)\,\d x\right)^2,
\end{align*}
where $\psi$ is any  strictly positive function in $L^2(\R^d;\d x)$.
We note here that the constant $c_3$ above is independent of $C$,
but both $C_0:=C_0(C)$ and $R_0:=R_0(C)$ depend on $C$.

Now, for any $r>0$, we first take $C= 1/r$
(i.e., $1/C= r$ and $C_0:=C_0(C)=C_0(1/r)$ is fixed),
and then choose $s:=s(r)=r/(2c_3(1+rC_0(1/r)))>0$
so that $c_3 s(1+C_0/C)=r/2$. Then,
for any $f\in C_c^\infty(\R^d)$
and for any strictly positive function $\psi\in L^2(\R^d;\d x)$,
\begin{align*}\int_{\R^d} f(x)^2\,\d x
\le & r\E(f,f)+\frac{r}{2}\int_{\R^d} f(x)^2\,\d x \\
&
+\alpha(r)\left(\sup_{z\in B(0,R_0(1/r)+1)}\psi(z)^{-1}\right)
\left(\int_{\R^d} |f(x)|\psi(x)\,\d x\right)^2,\end{align*}
where
$$\alpha(r):= \beta_0(s(r))\left(1+ {rC_0(1/r)} \right).$$
This in particular implies that for all
$0<r\le 1$,
$$\int_{\R^d} f(x)^2\,\d x \le 2r\E(f,f)
+2\alpha(r)\left(\sup_{z\in B(0,R_0(1/r)+1)}\psi(z)^{-1}\right)
\left(\int_{\R^d} |f(x)|\psi(x)\,\d x\right)^2.$$ Hence, for all $0<r\le 2$,
$$\int_{\R^d} f(x)^2\,\d x \le  r\E(f,f)
+2\alpha(r/2)\left(\sup_{z\in B(0,R_0(2/r)+1)}\psi(z)^{-1}\right)
\left(\int_{\R^d} |f(x)|\psi(x)\,\d x\right)^2.$$
Therefore, for any $r>0$ and $f\in C_c^\infty(\R^d)$,
\begin{equation}\label{e:super-}
\int_{\R^d} f(x)^2\,\d x
\le r\E(f,f)+\beta(r)\left(\int_{\R^d} |f(x)|\psi(x)\,\d x\right)^2,
\end{equation} where
$$\beta(r)= 2\alpha((2\wedge r)/2))
\left(\sup_{z\in B(0,R_0(2/(2\wedge r)+1)}\psi(z)^{-1}\right).$$
Note that, in \eqref{e:super-} we can take any strictly positive and bounded function $\psi\in L^2(\R^d;\d x)$; for example, $\psi(x)=(1+|x|)^{-d-\theta}$ with $\theta>0$
or $\psi(x)=e^{-c|x|^\theta}$ with $c,\theta>0$. In particular, with this choice, $\beta(r)<\infty$ for all $r>0$.
Since $C_c^{\infty}(\R^d)$ is $\|\cdot\|_{{\E}_1}$-dense in $\F$,
\eqref{e:super-} is valid for any $f\in \F$.
Hence, we obtain the essential super Poincar\'e inequality for $(\E,\F)$.
Combining this with \cite[Theorems 3.1.7, 3.2.1 and 0.3.9]{W05}, we prove that
the semigroup $(P_t)_{t\ge0}$ is compact on $L^2(\R^d;\d x)$, also thanks to the assertion claimed above that $(P_t)_{t\ge0}$ has a transition density function
with respect to the Lebesgue measure.
\end{proof}

\begin{rem}\rm As seen from the proof above, Proposition \ref{prop:ratio} is the key ingredient to yield Theorem {\rm \ref{thm:cpt}}. This along with Remark \ref{e:remark-key} partly explain the reason why the criteria for compactness of the semigroup associated with the non-local Dirichlet form $(\E^*,\F^*)$ on $L^2(\R^d;\d x)$ with finite range jumping kernel is similar to these for the local Dirichlet form given by
\eqref{eq:elliptic}. On the other hand, following the proof of Theorem \ref{thm:cpt} (in particular the comparison argument),  one may show that
the semigroup associated with the local Dirichlet form
$(\E^L,\F^L)$ given in Remark \ref{rem:diff} is compact, if
$$
\liminf_{|x|\to \infty}\inf_{\xi\in \R^d}\frac{\sum_{i,j=1}^d a_{ij}(x)\xi_i\xi_j}{|x|^2|\xi|^2}=\infty.
$$
\end{rem}

\section{Proof of Theorem \ref{T:main} and further examples}\label{section3}
We first present the
\begin{proof}[Proof of Theorem $\ref{T:main}$]

Let $W(x,y)$ be as in \eqref{eq:funct-w}
such that $U_i$ $(i=1,2)$ is locally bounded
and \eqref{e:cond000} holds.
Then it is easy to verify the condition  \eqref{e:core}.

We first prove (i).
Under the setting in this assertion,
$W(x,y)$ satisfies Assumption \ref{assum:w}(i).
Let $J(x,\d y)=|x-y|^{-d-\alpha}\,\d y$
and $\nu(\d z)={\bf 1}_{\{0<|z|<1\}}|z|^{-d-\alpha}\,\d z$.
Then, the measure $\nu$ is symmetric, and
$J(x,x+A)=\nu(A)$ for any $x\in \R^d$
and for any Borel set $A\subset B(0,1)$.
Moreover, by the proof of \cite[Proposition 2.2]{CKK08},
there exists $c_1>0$ such that for any $\xi\in \R^d$,
\begin{equation}\label{eq:symbol}
\int_{\{0<|z|<1\}}(1-\cos\langle z,\xi\rangle)\,\nu(\d z)
\geq c_1(|\xi|^2\wedge |\xi|^{\alpha}).
\end{equation}
Hence Assumption \ref{assum:w}(ii) is fulfilled
with $\beta_0(r)=c_2(r^{-d/\alpha}\vee r^{-d/2})$ for all $r>0$.
Then by Theorem \ref{thm:cpt},
the semigroup $(P_t)_{t\ge0}$ is compact on $L^2(\R^d;\d x)$.

We next prove (ii).
Under the setting in this assertion,
by some simple calculations, we find that for all $l\ge1$,
$$\int_{\{|x|\le l\}}
 \int_{\R^d}
\left(1\wedge \frac{|x-y|^2}{l^2}\right){W(x,y)}\,J(x,\d y)\,\d x
\le c_3(l^{d}+l^{d+p-\alpha}). $$
This along with
the assumption $p\in [0,\alpha)$ yields that \eqref{e:sss} holds true. Therefore,
the semigroup $(P_t)_{t\ge0}$ is not compact on $L^2(\R^d;\d x)$
by Theorem \ref{Thm2.1}.
\end{proof}

We next present further examples of non-local Dirichlet forms to which
Theorems \ref{Thm2.1} and  \ref{thm:cpt}
are applicable.

Let $\sigma(d\theta)$ be a finite and symmetric nonnegative measure
on the unit sphere $\Ss$ in $\R^d$. Assume that
$\sigma$ is
nondegenerate in the sense that its support is not contained in any proper linear subspace of $\R^d$.
For any $\alpha\in (0,2)$,
let
\begin{equation}\label{e:meausure}
\nu(A)=\int_0^1
\left(\int_{\Ss}{\bf 1}_A(r\theta) \,\sigma(\d \theta)\right)
r^{1+\alpha}\,\d r,
\quad A\in \mathcal{B}(\R^d).
\end{equation}
Then, by
the proof of \cite[Example 1.5]{SSW12}, we know that \eqref{eq:symbol} still holds true.
With this fact at hand, we have the following two examples, which indicate that Theorems \ref{Thm2.1} and  \ref{thm:cpt} work for some degenerate or singular
(with respect to the Lebesgue measure) jumping kernels.

\begin{exam}\label{ex:dege}\rm
Let $\Lambda$ be an infinite cone on $\R^d$ with $d\ge2$ that has non-empty interior and is symmetric with respect to the
origin, i.e.,
$\lambda x\in \Lambda$ if $x\in \Lambda$ and $\lambda\in \R$.
For $\alpha\in (0,2)$, let
$$
\nu(\d z)
= {\bf1}_{\{
0<
|z|<1\}}\frac{1}{|z|^{d+\alpha}}{\bf1}_{\Lambda}(z)\,\d z
$$
and
$
J_1(x,\d y)=\nu(\d (y-x))$.
Then,
$J_1(x,\d y)$ satisfies Assumption \ref{assum:w}(ii) with the measure $\nu$.
On the other hand, \eqref{e:meausure} holds with $\sigma(\d \theta)={\bf 1}_{\Lambda\cap \Ss}(\theta)\,\mu(\d \theta)$, where $\mu$ is the Lebesgue surface measure on $\Ss$. Hence, \eqref{eq:symbol} remains valid for $\nu$.
Therefore, according to the proof of Theorem \ref{T:main},
the assertion of Theorem \ref{T:main} is true
with
$J(x,\d y)=|x-y|^{-d-\alpha}\,\d y$
replaced by
the jumping kernel $J_1(x,\d y)$ above.

In fact, following the proof of Theorem \ref{T:main}, we can obtain that Theorem \ref{T:main} also holds true for
$$
J_2(x,\d y)
=\left(\sum_{n=0}^{\infty}|x-y|^{-(d+\alpha)} {\bf 1}_{\{|x-y|\in [2^{-(2n+1)}, 2^{-2n})\}}\right)\,\d y,
$$ with $\alpha\in (0,2)$.
\end{exam}

\begin{exam}\rm
For any $\alpha\in (0,2)$,
let
$$\nu(\d z)=\sum_{i=1}^d \left[{\bf 1}_{\{
0<
|z_i|< 1\}}\frac{\d z_i}{|z_i|^{1+\alpha}}\otimes \prod_{j\neq i} \delta_{0}(\d z_j)\right],$$ where $\delta_0$ is the Dirac measure at $0\in \R$.
Let $\{e_i\}_{i=1}^d$ be the standard orthonormal basis of $\R^d$
and
$$
\sigma(\d \theta):=\sum_{i=1}^d \delta_{\pm e_i}(\d \theta),
$$
where $\delta_{\theta_0}(\d \theta)$ denotes
the Dirac measure on $\Ss$ concentrated at $\theta_0\in \Ss$.
Then the measure $\nu$ satisfies \eqref{eq:symbol}
because \eqref{e:meausure} holds with this $\sigma$.
Therefore, by the proof of Theorem \ref{T:main},
the assertion of Theorem \ref{T:main} is true
with
$J(x,\d y)=|x-y|^{-d-\alpha}\,\d y$
replaced by
the jumping kernel $J_3(x,\d y)=\nu(\d (y-x))$.

Indeed, we can further extend the example above to more general setting.
Let $d=d_1+d_2$ with $d_i\ge1$ for $1\le i\le 2$.
For $i=1,2$, let $\delta_z^{(i)}$ be the $d_i$-dimensional
Dirac measure at $z\in \R^{d_i}$,
and $m^{(i)}$ the $d_i$-dimensional Lebesgue measure.
For $\alpha_1,\alpha_2\in (0,2)$,
define a rotationally invariant measure $\nu$ on $\R^{d_1+d_2}$ by
\begin{align*}
\nu(\d z)
&=\nu(\d z_1\times \d z_2)\\
&={\bf 1}_{\{
0<
|z_1|< 1\}}\frac{1}{|z_1|^{d_1+\alpha_1}}m^{(1)}(\d z_1)\otimes \delta_{0}^{(2)}(\d z_2)\\
&\quad+{\bf 1}_{\{
0<
|z_2|< 1\}}\frac{1}{|z_2|^{d_2+\alpha_2}}\delta_{0}^{(1)}(\d z_1)\otimes m^{(2)}(\d z_2),
\end{align*}
and  the kernel $J_4(x,\d y)$ on $\R^{d_1+d_2}\times {\cal B}(\R^{d_1+d_2})$ by
\begin{align*}
J_4(x,\d y)
&=\nu(\d (y -x))\\
&={\bf 1}_{\{
0<
|x_1-y_1|< 1\}}\frac{1}{|x_1-y_1|^{d_1+\alpha_1}}\,m^{(1)}(\d y_1)\delta_{x_2}^{(2)}(\d y_2)\\
&\quad
+{\bf 1}_{\{
0<
|x_2-y_2|< 1\}}\frac{1}{|x_2-y_2|^{d_2+\alpha_2}}\,\delta_{x_1}^{(1)}(\d y_1)m^{(2)}(\d y_2).
\end{align*}
Note that the measure $\nu(\d z)$ above is
 a L\'evy measure of the direct product
of
truncated
symmetric $\alpha_i$-stable processes on $\R^{d_i}$ with $i=1,2$. Then, we can claim that the assertion of Theorem \ref{T:main} remains true
with $q\in [0,\alpha_1\wedge \alpha_2)$
and the jumping kernel $J_4(x,\d y)$ above.
\end{exam}

\noindent \textbf{Acknowledgements.}
The research of Yuichi Shiozawa is supported
in part by JSPS KAKENHI No.\ JP17K05299. The research of Jian Wang is supported by the National Natural Science Foundation of China (No.\ 11831014),
the Program for Probability and Statistics:
Theory and Application (No.\ IRTL1704),
and the Program for Innovative Research Team in Science and Technology
in Fujian Province University (IRTSTFJ).

\end{document}